\numberwithin{equation}{section}
\DeclareFontFamily{OT1}{rsfs}{}
\DeclareFontShape{OT1}{rsfs}{n}{it}{<-> rsfs10}{}
\DeclareMathAlphabet{\mathscr}{OT1}{rsfs}{n}{it}
\theoremstyle{plain}
\newtheorem{theorem}{Theorem}[section]
\newtheorem{lemma}[theorem]{Lemma}
\theoremstyle{definition}
\newtheorem{remark}[theorem]{Remark}
\newcommand{\abs}[1]{\left\vert#1\right\vert}
\newcommand{\norm}[1]{\Vert#1\Vert}
\newcommand\R{\mathbb{R}}
\def \Re {\,{\rm Re}\,}
\def \supp {\,{\rm supp}\,}
\begin{document}

\title[$L^p$ decay of oscillatory integral operators]{Sharp $L^p$ decay estimates for degenerate and singular oscillatory integral operators}

\author{Shaozhen Xu}
\address{Department of Mathematics, Nanjing University, China, 210093}
\email{shaozhen@nju.edu.cn}

\subjclass[2010]{42B20 47G10}

\begin{abstract}
We consider the following model of degenerate and singular oscillatory integral operators introduced in \cite{liu1999model}:
\begin{equation}\label{OSO}
Tf(x)=\int_{\R} e^{i\lambda S(x,y)}K(x,y)\psi(x,y)f(y)dy,
\end{equation}
where the phase functions are homogeneous polynomials of degree $n$ and the singular kernel $K(x,y)$ satisfies suitable conditions related to a real parameter $\mu$. We show that the sharp decay estimates on $L^2$ spaces, obtained in \cite{liu1999model}, can be preserved on more general $L^p$ spaces with an additional condition imposed on the singular kernel. In fact, we obtain that
\begin{equation*}
\|Tf\|_{L^p}\leq C_{E,S,\psi,\mu,n,p}\lambda^{-\frac{1-\mu}{n}}\|f\|_{L^p},\ \  \frac{n-2\mu}{n-1-\mu}\leq p \leq\frac{n-2\mu}{1-\mu}.
\end{equation*}
The case without the additional condition is also discussed.
\end{abstract}

\maketitle
\tableofcontents

\section{Introduction}

The central topic of oscillatory integrals is to find the optimal decay estimates. It was revealed that property of phase function plays a dominant role. For nondegenerate scalar oscillatory integrals, stationary phase method gives precise asymptotics which naturally implies the optimal decay estimate. Generally speaking, it is much more complicated to get optimal decay in degenerate cases. For the degenerate oscillatory integrals with real-analytic phases, it was conjectured by Arnold that the optimal decay is determined by the Newton distance of the phase function. By using Hironaka's celebrated theorem of resolution of singularities \cite{hironaka1964resolution}, Varchenko \cite{varchenko1976newton} confirmed Arnold's conjecture for real-analytic phase functions with some nonsingular conditions and thus built deep connections between real algebraic geometry and oscillatory integrals. In harmonic analysis, like Fourier transform, more attentions are paid to those operators with oscillatory kernels. Actually, a general oscillatory integral operator can be written as
\begin{equation}\label{oio}
T_\lambda f(x)=\int_{\R^n} e^{i\lambda S(x,y)}\psi(x,y)f(y)dy.
\end{equation}
If the phase function is nondegenerate in the sense that the Hessian of $S(x,y)$ is nonvanishing in the support of $\psi(x,y)$. By using $TT^{*}$-method globally, H\"{o}rmander \cite{hormander1973oscillatory} showed that $\|T_\lambda\|_{L^2\rightarrow L^2}$ has the sharp decay $\lambda^{-\frac{n}{2}}$. However, for the oscillatory integral operators with degenerate phases, $TT^{*}$-method no longer suits. For polynomial-like phase functions, by using $TT^{*}$-method locally, Phong and Stein successfully established a uniform oscillatory integral estimate, called Operator van der Corput Lemma, details can be found in \cite{phong1994models}\cite{phong1997newton}. By means of the local operator van der Corput lemma and orthogonality argument, Phong and Stein succeeded to give the sharp $L^2$ decay of the oscillatory integral operators with real-analytic phase functions. They also clarified the relation between decay rate and the Newton distance of the phase function. Unlike Varchenko's results, which require some nonsingular conditions on phase functions, Phong-Stein's results in operator setting are verified for all real-analytic phase functions. The difference has an intuitive explanation which has been mentioned in \cite{phong2001multilinear} that operators of the form \eqref{oio} fix the directions of the axes, and this turns out to eliminate the excess freedom in the choice of coordinate systems which was essential in original scalar oscillatory integrals. Based on the pre-mentioned works, Rychkov\cite{rychkov2001sharp} and Greenblatt \cite{greenblatt2005sharp} finally extended similar sharp results to the oscillatory integral operators with smooth phases. Since $L^2$ case has been understood well, an interesting question is whether or not the sharp decay estimate can be preserved on general $L^p$ spaces. In \cite{yang2004sharp}, Yang got an affirmative answer for oscillatory integral operators with special homogeneous polynomial phases. Actually, by embedding these operators into a family of analytic operators and establishing the corresponding boundedness result between critical spaces $H^1$ and $L^1$, he used the damped estimates in \cite{phong1998damped} and Stein's interpolation to give sharp $L^p$ decay. In \cite{shi2017sharp}, Shi and Yan established sharp endpoint $L^p$ decay for arbitrary homogeneous polynomial phase functions. Later, Xiao extended this result to arbitrary analytic phases in \cite{xiao2017endpoint}, another proof see also \cite{shi2019damping}. It should be pointed out that our argument relies heavily on the techniques developed in the previous articles.\\
Inserting a singular kernel into \eqref{oio} will produce \eqref{OSO} which is exactly what this paper is concerned with. Motivated by studying harmonic analysis on nilpotent groups, Ricci and Stein \cite{ricci1987harmonic} first considered this kind of polynomial-phase oscillatory operators with a standard Calder\'{o}n-Zygmund kernel. They established a result that $T$ is bounded from $L^p$ to itself with the bound independent of the coefficients of the phase function, this shows that the decay produced by the oscillatory term is in some sense cancelled by the singular kernel. In \cite{liu1999model}, the author proposed that if some modified size and derivative conditions are imposed on the singular kernel, decay estimates also exist. One direction to extend this result is to study more complicated phases, see for instance \cite{zhang20032}. Another direction is to see if the same sharp decay can be preserved on more general $L^p$ spaces. Our main result tried in the latter direction and states as follows.
\begin{theorem}\label{Main-Result}
For \eqref{OSO}, if $S(x,y)$ is a homogeneous polynomial of degree $n$ and can be written as
\begin{equation}\label{phase}
 S(x,y)=\sum_{k=1}^{n-1}a_kx^{n-k}y^k,
\end{equation}
where $a_1a_{n-1}\neq 0$, $K(x,y)$ is a $C^2$ function away from the diagonal satisfying
\begin{equation}\label{SinCond}
\abs{K(x,y)}\leq E\abs{x-y}^{-\mu}, \ \ \abs{\partial_y^i K(x,y)}\leq E\abs{x-y}^{-\mu-i},
\end{equation}
where the $0<\mu<1$ and $i=1, \text{or } 2$. Moreover, we impose an additional condition on $K(x,y)$,
\begin{equation}\label{AddCondi}
\abs{\partial_x^i K(x,y)}\leq E\abs{x-y}^{-\mu-i}. \tag{AC}
\end{equation}
Then the sharp decay estimate
\begin{equation}\label{Main-Inq}
\|Tf\|_{L^p}\leq C_{E,S,\psi,\mu,n,p} \lambda^{-\frac{1-\mu}{n}}\|f\|_{L^p}
\end{equation}
holds for $\frac{n-2\mu}{n-1-\mu}\leq p\leq \frac{n-2\mu}{1-\mu}$.
\end{theorem}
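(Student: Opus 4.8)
The plan is to use the $L^{2}$ bound of \cite{liu1999model} as a black box and to reduce everything else to one sharp endpoint estimate, obtained by embedding $T$ in an analytic family of damped operators and invoking Stein's interpolation theorem. Set $p_{1}=\frac{n-2\mu}{1-\mu}$; its conjugate is $p_{1}'=\frac{n-2\mu}{n-1-\mu}$, so the interval in the statement is exactly $[\,p_{1}',\,p_{1}\,]$ and is symmetric about $2$. The role of \eqref{AddCondi} is precisely to make the hypotheses on $K$ symmetric in its two variables: then the adjoint $T^{*}$ is again an operator of the form \eqref{OSO} whose phase $-S(y,x)$ is a homogeneous polynomial of degree $n$ with nonvanishing extreme coefficients, so \eqref{Main-Inq} for the exponent $p$ is equivalent to \eqref{Main-Inq} for $p'$, and it suffices to prove it for $p_{1}'\le p\le 2$. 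Interpolating (Riesz--Thorin) such an endpoint bound with Liu's estimate $\|Tf\|_{L^{2}}\le C\lambda^{-(1-\mu)/n}\|f\|_{L^{2}}$, both carrying the \emph{same} decay exponent, the whole theorem follows once one establishes
\[
\|Tf\|_{L^{p_{1}'}}\le C\,\lambda^{-(1-\mu)/n}\,\|f\|_{L^{p_{1}'}}.
\]

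To prove this I would put $Q(x,y)=\partial_{x}\partial_{y}S(x,y)$, a homogeneous polynomial of degree $n-2$; the hypothesis $a_{1}a_{n-1}\ne 0$ guarantees that neither $Q(\cdot,0)$ nor $Q(0,\cdot)$ vanishes identically — equivalently, the reduced Newton distance of $S$ equals $n/2$ — which is what makes $\lambda^{-(1-\mu)/n}$ the correct rate and is exactly the input taken from \cite{liu1999model}. Then form an analytic family
\[
T_{z}f(x)=\lambda^{c z}\int_{\R}e^{i\lambda S(x,y)}\,|x-y|^{\alpha z}\,|Q(x,y)|^{\beta z}\,K(x,y)\,\psi(x,y)\,f(y)\,dy ,\qquad T_{0}=T ,
\]
with $c,\alpha,\beta$ chosen (after the fact) so that the interpolation below lands on the right exponent with the right decay; this family has admissible growth since on each vertical line the modulus of the extra factors depends only on $\Re z$. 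I would then prove two boundary estimates. First, at some $\Re z=a>0$: a \emph{damped} $L^{2}\to L^{2}$ bound with good decay, where $|Q|^{\beta z}$ is the Phong--Stein damping killing the degeneracy of $S$ along $\{Q=0\}$ and $|x-y|^{\alpha z}$ turns $K$ into a Calder\'on--Zygmund–type kernel near the diagonal, so that the operator van der Corput lemma of \cite{phong1994models,phong1997newton} applies after the usual rescaling/almost-orthogonality on dyadic shells $|x-y|\sim 2^{-j}$. Second, at some $\Re z=b<0$: an $H^{1}\to L^{1}$ bound with a (smaller) decay, from the atomic decomposition — Cauchy--Schwarz against the $L^{2}$ bound on the double of an atom's support, and the cancellation of the atom together with the derivative bounds \eqref{SinCond}--\eqref{AddCondi} on the complement — arranging $|b|$ small enough that the more singular, anti-damped kernel $|x-y|^{-\mu(1+|b|)}|Q|^{-|b|\beta}$ stays usable. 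With $a,b$ satisfying $\theta a+(1-\theta)b=0$ and $\theta=\frac{2(1-\mu)}{n-2\mu}$, Stein interpolation between these two bounds gives $T_{0}=T$ bounded on $L^{p_{1}'}$ with decay exponent $\theta\gamma_{a}+(1-\theta)\gamma_{b}$, and a direct computation of the exponents shows this equals $(1-\mu)/n$; together with the first paragraph this is the theorem. (When \eqref{AddCondi} is dropped the same scheme runs, but now only one of $T,T^{*}$ has the good structure, so only a one-sided sub-range of $p$ survives — which is the ``case without the additional condition''.)

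The hard part is estimate (a), or rather forcing its decay, together with that of (b), to match the target exactly and uniformly in $\lambda$. Three sources of loss coexist and must be balanced against the two damping factors: the degeneracy of the phase along $\{Q=0\}$; the concentration of this degeneracy at the origin (where $S$ vanishes to order $n$), so that the truly obstructive region is a box of side $\sim\lambda^{-1/n}$ supporting no oscillation; and the diagonal singularity of $K$, whose derivative bounds \eqref{SinCond}--\eqref{AddCondi} are non-integrable. Handling all three demands a genuinely two-parameter decomposition — dyadic in $|x-y|$ for the kernel and dyadic in $|Q(x,y)|$ (i.e.\ in the distance to $\{Q=0\}$) for the oscillation — followed by a careful summation of the resulting geometric series. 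It is the borderline convergence of those sums, together with the integrability constraints on the kernels occurring at $\Re z=a$ and $\Re z=b$ (which is where $0<\mu<1$, and $n\ge 3$ — for $n=2$ the interval is just $\{2\}$ — enter), that pins the exponent range down to $[\frac{n-2\mu}{n-1-\mu},\frac{n-2\mu}{1-\mu}]$.
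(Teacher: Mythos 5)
Your scheme — a single global analytic family $T_z$ with a two‑factor damping $|x-y|^{\alpha z}\,|S''_{xy}|^{\beta z}$, Stein‑interpolated between a damped $L^2$ bound at $\Re z=a>0$ and an $H^1\to L^1$ bound at $\Re z=b<0$ — is the Yang/Xiao strategy, and your exponent arithmetic does land on $p_1'=\frac{n-2\mu}{n-1-\mu}$. But this is not the route the paper takes, and it is chosen against in the introduction precisely because "the proof of boundedness result between critical spaces is complicated in general." The paper instead peels off the near‑diagonal piece $T_1$ (support in $|x-y|\lesssim\lambda^{-1/n}$) by a Schur test, dyadically decomposes the remainder in $|x|\sim 2^{-j}$, $|y|\sim 2^{-k}$, and splits $T_2=T_Y+T_\Delta+T_X$ according to $j\gg k$, $j\sim k$, $j\ll k$. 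On $T_X,T_Y$ the Hessian $S''_{xy}$ behaves like a pure power of the dominant variable with no zeros, so the anti‑damped endpoint at $\Re z=-\frac{1-\mu}{n-2}$ is an elementary size estimate ($L^1\to L^1$ for $T_Y$, only $L^1\to L^{1,\infty}$ for $T_X$ — which is why a power‑weight change‑of‑measure interpolation lemma, not plain Stein or Riesz–Thorin, is invoked for $T_X$); the damped $L^2$ bound comes from a Cotlar–Stein argument with a Phong–Stein almost‑orthogonality lemma adapted to the singular kernel. On $T_\Delta$, where the true zeros of $S''_{xy}$ live, the paper avoids any critical‑space estimate entirely: it iterates dyadic decompositions along each variety $y=\alpha_\ell x$ and $y=x$, uses local Riesz–Thorin between the local $L^2$ operator van der Corput bound and trivial $L^1/L^\infty$ endpoints, and sums via the Phong–Stein–Sturm bilinear almost‑orthogonality lemma.

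The gap in your proposal is the $H^1\to L^1$ estimate at $\Re z=b<0$, which you correctly flag as "the hard part" but do not resolve. There your kernel is $|x-y|^{-\mu+\alpha b}\,|S''_{xy}|^{\beta b}K(x,y)$, simultaneously singular along the diagonal and along the zero lines of $S''_{xy}$. The standard atomic argument needs (i) a local $L^2$ bound for the \emph{anti‑damped} operator $T_b$ on the double of an atom's support — not the damped bound at $\Re z=a$, which is what the operator van der Corput lemma gives — and (ii) pointwise bounds on $\partial_y$ of the anti‑damped kernel away from the atom that are integrable, which fails near $\{S''_{xy}=0\}$ since $\partial_y|S''_{xy}|^{\beta b}\sim|S''_{xy}|^{\beta b-1}$; both problems are compounded by the extra $|x-y|^{\alpha b}$ factor near the diagonal. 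These are exactly the difficulties the paper's $T_1/T_X/T_\Delta/T_Y$ splitting is designed to dodge. To make your version rigorous you would need to redo the $H^1$ analysis of \cite{xiao2017endpoint} while tracking the additional diagonal singularity $|x-y|^{-\mu+\alpha b}$ and the worsened $L^2$ constant at $\Re z=b$ — a substantial piece of work that is not sketched here, and whose avoidance is the whole point of the paper's decomposition.
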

\begin{remark}
The purpose of imposing the condition \eqref{AddCondi} is to make duality argument in establishing Theorem \ref{Main-Result} and Remark \ref{DualLemma} possible.
\end{remark}
The usual Riesz-Thorin interpolation may give a $L^p$ estimate but it also bears the loss of sharp decay. Applying Stein's complex interpolation method to damped oscillatory integral operators may provide a sharp $L^p$ decay estimate but the proof of boundedness result between critical spaces is complicated in general. Inspired by \cite{xiao2017endpoint}\cite{shi2018uniform}\cite{shi2019damping}, we shall separate the operator into three parts and then apply different interpolations to each one, thus we can get an elegant proof.\\
{\bf Notation:} For two positive constants $C_1$ and $C_2$, $C_1\lesssim C_2$ or $C_1\gtrsim C_2$ means that there exists an constant $C$ independent of $\lambda$ and the test function $f$ such that $C_1\leq CC_2$ or $C_1\geq CC_2$. In this paper, all parameters, some constant depends on, shall be listed in the subscript.

\section{First step toward Theorem \ref{Main-Result}}
Along the lines of the proof of \cite{liu1999model}, \eqref{OSO} is decomposed into two main parts:
\begin{align*}
Tf(x)&=\int_{\R} e^{i\lambda S(x,y)}K(x,y)\psi(x,y)f(y)dy\\
&=\int_{\R} e^{i\lambda S(x,y)}K(x,y)\phi\left((x-y)\lambda^{\frac{1}{n}}\right)\psi(x,y)f(y)dy+\\
&\quad\int_{\R} e^{i\lambda S(x,y)}K(x,y)\left[1-\phi\left((x-y)\lambda^{\frac{1}{n}}\right)\right]\psi(x,y)f(y)dy\\
&:=T_1f(x)+T_2f(x),
\end{align*}
where $\phi\in C_0^\infty(\R)$ and
\begin{equation*}
\phi(x)\equiv
\begin{cases}
0, &\quad \abs{x}\geq 1,\\
1, &\quad \abs{x}\leq \frac{1}{2}.
\end{cases}
\end{equation*}
To prove our Theorem \ref{Main-Result}, it suffices to verify that \eqref{Main-Inq} holds for both $T_1$ and $T_2$. Observe that the kernel of $T_1$
\begin{equation*}
K_1(x,y)=e^{i\lambda S(x,y)}K(x,y)\phi\left((x-y)\lambda^{\frac{1}{n}}\right)\psi(x,y)
\end{equation*}
is absolutely integrable. Thus we may ignore the oscillatory term and employ the following Schur test to prove \eqref{Main-Inq} for $T_1$.
\begin{lemma}\label{Schur-Test}
If the operator
\begin{equation*}
Vf(x)=\int K(x,y)f(y)dy,
\end{equation*}
has a kernel $K(x,y)$ satisfying
\begin{equation*}
\sup_x\int \abs{K(x,y)}dy\leq A_1, \quad \sup_y\int \abs{K(x,y)}dx\leq A_2,
\end{equation*}
then we have
\begin{equation*}
\norm{V}_{L^p\rightarrow L^p}\leq \left(\frac{A_1}{p}+\frac{A_2}{p'}\right),
\end{equation*}
where $1\leq p\leq+\infty$.
\end{lemma}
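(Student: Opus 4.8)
The statement is the classical Schur test, so the plan is short. First I would run H\"older's inequality once on the pointwise bound $|Vf(x)|\le\int|K(x,y)|\,|f(y)|\,dy$, after splitting the weight as $|K(x,y)|=|K(x,y)|^{1/p'}\cdot|K(x,y)|^{1/p}$. Applying H\"older with conjugate exponents $p'$ (to the factor $|K(x,y)|^{1/p'}$) and $p$ (to the factor $|K(x,y)|^{1/p}|f(y)|$) gives
\[
|Vf(x)|\le\Big(\int|K(x,y)|\,dy\Big)^{1/p'}\Big(\int|K(x,y)|\,|f(y)|^{p}\,dy\Big)^{1/p}\le A_1^{1/p'}\Big(\int|K(x,y)|\,|f(y)|^{p}\,dy\Big)^{1/p}.
\]

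Next I would raise this to the $p$-th power, integrate in $x$, and interchange the order of integration (legitimate by Tonelli's theorem since the integrand is nonnegative) to obtain
\[
\|Vf\|_{L^p}^{p}\le A_1^{p/p'}\int_{\R}|f(y)|^{p}\Big(\int_{\R}|K(x,y)|\,dx\Big)dy\le A_1^{p/p'}A_2\,\|f\|_{L^p}^{p},
\]
hence $\|V\|_{L^p\to L^p}\le A_1^{1/p'}A_2^{1/p}$. The stated arithmetic form of the bound then follows from Young's inequality (weighted AM--GM) applied to the product $A_1^{1/p'}\cdot A_2^{1/p}$. The endpoint cases $p=1$ and $p=\infty$ are the degenerate instances of the same computation (directly, $\|V\|_{L^1\to L^1}\le A_2$ by Tonelli and $\|V\|_{L^\infty\to L^\infty}\le A_1$ trivially), and one could alternatively deduce the general case from these two by Riesz--Thorin interpolation, with the same constant.

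There is essentially no obstacle inside the lemma itself: the only points needing a word of care are the bookkeeping of conjugate exponents in the H\"older step (the weight must be paired with $|f|^{p}$, not with $|f|$) and the appeal to Tonelli for the change of order of integration. The genuine work comes immediately afterwards, when the lemma is applied to $T_1$: there one must verify, using the size bound in \eqref{SinCond} together with the cutoff $\phi\big((x-y)\lambda^{1/n}\big)$, which localizes to $|x-y|\lesssim\lambda^{-1/n}$, that $\sup_x\int|K_1(x,y)|\,dy$ and $\sup_y\int|K_1(x,y)|\,dx$ are both $\lesssim\lambda^{-(1-\mu)/n}$, so that Lemma \ref{Schur-Test} delivers exactly the decay required by \eqref{Main-Inq}.
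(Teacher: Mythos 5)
Your proof is correct but takes a genuinely different route from the paper's. The paper argues by duality: it writes $\norm{V}_{L^p\to L^p}=\sup_{\norm{f}_p\le 1}\sup_{\norm{g}_{p'}\le1}\abs{\langle Vf,g\rangle}$, applies the pointwise Young inequality $ab\le a^p/p+b^{p'}/p'$ to $\abs{f(y)}\abs{g(x)}$ under the double integral, and then uses Fubini and the two kernel bounds separately; this produces the additive constant directly. You instead run the classical Schur-test argument, splitting $\abs{K}=\abs{K}^{1/p'}\cdot\abs{K}^{1/p}$, applying H\"older and Tonelli, and arriving at the multiplicative bound $A_1^{1/p'}A_2^{1/p}$, which is in fact sharper; the additive form then follows from AM--GM. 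Both are standard and both work; the paper's version is self-dual in $f,g$ and avoids any raising-to-the-$p$th-power, while yours gives the stronger classical constant.

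One small but real discrepancy you glossed over: weighted AM--GM applied to $A_1^{1/p'}A_2^{1/p}$ gives $\frac{A_1}{p'}+\frac{A_2}{p}$, not the $\frac{A_1}{p}+\frac{A_2}{p'}$ printed in the lemma, and the two differ in general. This is actually a typo in the paper — tracing through the paper's own last displayed line, integrating $\abs{K(x,y)}\abs{f(y)}^p/p$ in $x$ first uses $\sup_y\int\abs{K}\,dx\le A_2$, so the correct coefficients are $A_2/p$ and $A_1/p'$. It is immaterial for the subsequent application since there $A_1=A_2$, but you should not have asserted that ``the stated arithmetic form follows'' from AM--GM without noticing the swap.

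Your concluding remarks on the application to $T_1$ (that the cutoff $\phi\bigl((x-y)\lambda^{1/n}\bigr)$ localizes to $\abs{x-y}\lesssim\lambda^{-1/n}$ and integrating $\abs{x-y}^{-\mu}$ over that range yields $\lesssim\lambda^{-(1-\mu)/n}$ in both variables) are correct and are exactly the paper's intended use of the lemma.
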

To make the present paper self-contained, we give the details of the proof.
\begin{proof}
\begin{align*}
\norm{V}_{L^p\rightarrow L^p}&=\sup_{\norm{f}_{L^p}\leq 1}\norm{Vf}_{L^p}\\
&=\sup_{\norm{f}_{L^p}\leq 1}\sup_{\norm{g}_{L^{p'}}\leq 1}\abs{\langle Vf, g\rangle}\\
&\leq \sup_{\norm{f}_{L^p}\leq 1}\sup_{\norm{g}_{L^{p'}}\leq 1}\int\int \abs{K(x,y)}\abs{f}\abs{g}dxdy.
\end{align*}
Convex inequality and Fubini's theorem imply
\begin{align*}
\int\int \abs{K(x,y)}\abs{f(y)}\abs{g(x)}dxdy&\leq \int\int \abs{K(x,y)}\left(\frac{\abs{f(y)}^p}{p}+\frac{\abs{g(x)}^{p'}}{p'}\right)dxdy\\
&\leq\frac{A_1}{p}\int \abs{f(y)}^pdy+\frac{A_2}{p'}\int \abs{g(x)}^pdx.
\end{align*}
It follows $\norm{V}_{L^p\rightarrow L^p}\leq \left(\frac{A_1}{p}+\frac{A_2}{p'}\right)$, then we complete the proof.
\end{proof}
Note that
\begin{equation*}
\sup_x\int \abs{K_1(x,y)}dy\leq C_{E,\psi,\mu}\lambda^{-\frac{1-\mu}{n}}, \quad \sup_y\int \abs{K_1(x,y)}dx\leq C_{E,\psi,\mu}\lambda^{-\frac{1-\mu}{n}}.
\end{equation*}
Thus Lemma \ref{Schur-Test} yields the desired result for $T_1$. For $T_2$, we may suppose $\supp{(\psi)}\subset [-\frac{1}{2}, \frac{1}{2}]\times[-\frac{1}{2}, \frac{1}{2}]$(if not we may impose a dilation on all variables and the dilation factor can be incorporated into $\lambda$). Choose a cut-off function $\Psi\in C_0^{\infty}$ such that $\supp{(\Psi)}\subset[\frac{1}{2},2]$ and $\sum_{l\in \mathbb{Z}}\Psi(2^lx)\equiv 1$. Thus $T_2$ can be dyadically decomposed as
\begin{align*}
T_2f(x)&=\sum_{\sigma_1,\sigma_2=\pm}\sum_{j,k}\int_{\R} e^{i\lambda S(x,y)}K(x,y)\left[1-\phi\left((x-y)\lambda^{\frac{1}{n}}\right)\right]\Psi_j(\sigma_1x)\Psi_k(\sigma_2y)\psi(x,y)f(y)dy\\
&:=\sum_{j,k}T_{j,k}^{\sigma_1,\sigma_2}f(x)
\end{align*}
where $\Psi_j(x)=\Psi(2^jx), \Psi_k(x)=\Psi(2^kx)$. For convenience, we focus only on the case $\sigma_1=+, \sigma_2=+$, the remaining cases can be dealt with similarly. We shall still use $T_2$ and $T_{j,k}$ to denote $\sum_{j,k}T_{j,k}^{+,+}$ and $T_{j,k}^{+,+}$ respectively.
Suppose that
\begin{equation}\label{Hessian}
S^{''}_{xy}(x,y)=c\prod_{l=1}^s(y-\alpha_lx)^{m_l}\prod_{l=1}^rQ_l(x,y),
\end{equation}
where $c$ and $\alpha_j$ are nonzero and each $Q_l(x,y)$ is a positive definite quadratic form. Since we have restricted our attention to the first quadrant, we may suppose that $0<\alpha_1<\alpha_2<\cdots<\alpha_s$. From this we know that the Hessian of $S$ vanishes on some lines crossing the origin. It also reveals an obvious fact
\begin{equation*}
\sum_{l=1}^sm_l+2r=n-2.
\end{equation*}
It should be noted that there is an implicit variety $y=x$ because of the singular kernel $K(x,y)$.
Before proceeding further, let us introduce some notations. Assume that $\mathcal{K}$ is a positive constant depending on $\alpha_1,\cdots,\alpha_s$ thus on $S$. Let $j\gg k$($j\ll k$) represent $j>k+\mathcal{K}$($j<k-\mathcal{K}$) such that the size of $y$-variable($x$-variable) is dominant in the Hessian $S_{xy}^{''}$, while $j\sim k$ naturally means $|j-k|\leq \mathcal{K}$.
To make full use of the nondegeneracy of the Hessian when localized on dyadic areas, we divide $T_2$ into three groups as follows.
\begin{align*}
T_2f(x)&=\sum_{j\gg k}T_{j,k}f(x)+\sum_{j\sim k}T_{j,k}f(x)+\sum_{j\ll k}T_{j,k}f(x)\\
&=T_Yf(x)+T_\Delta f(x)+T_Xf(x).
\end{align*}
If we can establish \eqref{Main-Inq} for $T_X, T_\Delta$ and $T_Y$ individually, then the proof of our main result is complete.

\section{Damped operators}
This section is devoted to proving some damped estimates which will be used to deal with $T_X$ and $T_Y$. In fact, our main strategy is to insert $T_X$ or $T_Y$ into the following two families of operators
\begin{align}
\notag T_Y^zf(x)&=\sum_{j\gg k}\int_{\R}e^{i\lambda S(x,y)}K(x,y)\abs{S^{''}_{xy}}^{z}\left[1-\phi\left((x-y)\lambda^{\frac{1}{n}}\right)\right]\cdot\\
\notag&\quad\quad\quad \phi_j(x)\phi_k(y)\psi(x,y)f(y)dy,\\
&:=\sum_{j\gg k}D^Y_{j,k}f(x),\label{T_X}\\
\notag T_X^zf(x)&=\sum_{j\ll k}\int_{\R}e^{i\lambda S(x,y)}K(x,y)\abs{S^{''}_{xy}}^{z}\left[1-\phi\left((x-y)\lambda^{\frac{1}{n}}\right)\right]\cdot\\
\notag &\quad\quad\quad\phi_j(x)\phi_k(y)\psi(x,y)f(y)dy\\
&:=\sum_{j\ll k}D^X_{j,k}f(x).\label{T_Y}
\end{align}
For each operator we establish the sharp $L^2$ decay estimate as well as the endpoint estimate, by adequate interpolations we get the desired results. This idea first appeared in \cite{shi2018uniform} and later was used in \cite{shi2019damping} to give a new proof of sharp $L^p$ decay of real-analytic oscillatory integral operators.
The sharp $L^2$ decay estimates for $T_X^z$ and $T_Y^z$ state as follows.
\begin{theorem}\label{dampL2}
If the Hessian of the phase function $S(x,y)$ is of the form \eqref{Hessian}, then for $\Re(z)=\frac{1}{2}$ we have
\begin{align}
&\norm{T_Y^zf}_{L^2}\leq C_{E,\mathcal{K},\psi,n,z}\lambda^{\frac{\mu}{n}-\frac{1}{2}}\norm{f}_{L^2},\label{dampL2-1}\\
&\norm{T_X^zf}_{L^2}\leq C_{E,\mathcal{K},\psi,n,z}\lambda^{\frac{\mu}{n}-\frac{1}{2}}\norm{f}_{L^2}.\label{dampL2-2}
\end{align}
\end{theorem}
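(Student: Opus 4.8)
The plan is to prove \eqref{dampL2-1} and \eqref{dampL2-2} by the same mechanism with the roles of $x$ and $y$ exchanged; I describe $T_Y^z=\sum_{j\gg k}D^Y_{j,k}$. The first point is that one should sum over $j$ \emph{before} estimating: for fixed $k$ the cutoffs $\phi_j(x)$ telescope into a single smooth function $\eta_k$ that equals $1$ on $\{0<x\lesssim 2^{-k}\}$ and is supported there, so $\mathcal{D}^Y_k:=\sum_{j\gg k}D^Y_{j,k}$ is a \emph{single} oscillatory integral operator, with kernel supported on $\{0<x\lesssim 2^{-k},\ |y|\sim 2^{-k}\}$. On that region $x$ is much smaller than $y$ --- which is exactly what $j\gg k$ encodes --- so, if $\mathcal{K}$ was fixed large enough in terms of $\alpha_1,\dots,\alpha_s$, every factor $y-\alpha_l x$ in \eqref{Hessian} is comparable to $y$ and every $Q_l(x,y)$ to $y^2$; hence $|S''_{xy}|\sim 2^{-k(n-2)}$ throughout $\supp \mathcal{D}^Y_k$, i.e.\ the Hessian is non-degenerate \emph{in size} there. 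The analogous reduction of $\sum_{j\ll k}D^X_{j,k}$ (summing now over $k$) produces single operators $\mathcal{D}^X_j$ with $x$ in the role of the large variable.

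I would then rescale. The isotropic dilation $x=2^{-k}u$, $y=2^{-k}v$ preserves $L^2\to L^2$ operator norms and carries the Jacobian onto the amplitude; since $S$ is homogeneous of degree $n$, the operator $\mathcal{D}^Y_k$ becomes $2^{-k}\cdot 2^{-k(n-2)/2}$ (using $\Re(z)=\tfrac12$) times an oscillatory integral operator with phase $\lambda_k S(u,v)$, $\lambda_k:=\lambda 2^{-kn}$, and amplitude supported in the \emph{fixed} box $\{0<u\lesssim 2^{-\mathcal{K}},\ v\sim 1\}$, on which $|S''_{uv}|\sim 1$. Here $2^{-k(n-2)/2}$ is the size of $|S''_{xy}|^z$ pulled out, while the residual $|S''_{uv}(u,v)|^z$ is a $C^2_v$ amplitude of size $\lesssim_z 1$ because $|S''_{uv}|\sim 1$ on the box; the rescaled singular kernel obeys $|\partial_v^i(K(2^{-k}u,2^{-k}v))|=2^{-ki}|\partial_y^i K(2^{-k}u,2^{-k}v)|\lesssim E\,2^{k\mu}$ for $i\le 2$ by \eqref{SinCond} and $|x-y|\sim 2^{-k}$ on this region; and the cutoffs $1-\phi((u-v)\lambda_k^{1/n})$, $\eta_k$, $\phi_k$, $\psi$ contribute uniformly bounded $C^2$ factors. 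Thus the rescaled amplitude has $C^2_v$ norm $\lesssim_{E,\mathcal{K},\psi,n,z} 2^{k\mu}$ --- and the two derivatives in \eqref{SinCond} are exactly those needed for the two integrations by parts in $y$ in the $TT^{*}$ argument. H\"ormander's $TT^{*}$ estimate for non-vanishing mixed Hessian then gives, for $\lambda_k\gtrsim 1$,
\[ \|\mathcal{D}^Y_k\|_{L^2\to L^2}\ \lesssim_{E,\mathcal{K},\psi,n,z}\ 2^{-k}\cdot 2^{-k(n-2)/2}\cdot 2^{k\mu}\cdot \lambda_k^{-1/2}\ =\ \lambda^{-1/2}\,2^{k\mu}. \]
For $\lambda_k\lesssim 1$, i.e.\ $2^{-k}\lesssim\lambda^{-1/n}$, the support condition $|x-y|\gtrsim\lambda^{-1/n}$ forces $\mathcal{D}^Y_k=0$ once $2^{-k}\ll\lambda^{-1/n}$, and on the narrow window $2^{-k}\sim\lambda^{-1/n}$ the oscillation is irrelevant and the Hilbert--Schmidt bound $\|\mathcal{D}^Y_k\|\lesssim 2^{-k}\cdot 2^{-k(n-2)/2}\cdot 2^{k\mu}$ already equals $\lambda^{-1/2}2^{k\mu}$. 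Hence $\|\mathcal{D}^Y_k\|_{L^2\to L^2}\lesssim_{E,\mathcal{K},\psi,n,z}\lambda^{-1/2}2^{k\mu}$ for every $k$, with $\mathcal{D}^Y_k\neq 0$ only when $2^{k}\lesssim\lambda^{1/n}$, so the occurring factors $2^{k\mu}$ never exceed $\lambda^{\mu/n}$.

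Finally I would sum over $k$ by the Cotlar--Stein lemma. Each $\mathcal{D}^Y_k$ integrates against $y$ in the annulus $\{|y|\sim 2^{-k}\}$, so $\mathcal{D}^Y_k(\mathcal{D}^Y_{k'})^{*}=0$ once $|k-k'|$ exceeds an absolute constant; therefore $\sum_{k'}\|\mathcal{D}^Y_k(\mathcal{D}^Y_{k'})^{*}\|^{1/2}\lesssim\sup_k\|\mathcal{D}^Y_k\|\lesssim\lambda^{\mu/n-1/2}$. For the other composition I only use $\|(\mathcal{D}^Y_k)^{*}\mathcal{D}^Y_{k'}\|^{1/2}\le(\|\mathcal{D}^Y_k\|\,\|\mathcal{D}^Y_{k'}\|)^{1/2}\lesssim\lambda^{-1/2}2^{(k+k')\mu/2}$ and sum the resulting geometric series over the admissible range $2^{k'}\lesssim\lambda^{1/n}$, which costs a factor $\lambda^{\mu/(2n)}$; together with $2^{k\mu/2}\lesssim\lambda^{\mu/(2n)}$ this gives $\sup_k\sum_{k'}\|(\mathcal{D}^Y_k)^{*}\mathcal{D}^Y_{k'}\|^{1/2}\lesssim\lambda^{\mu/n-1/2}$. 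Cotlar--Stein then yields $\|T_Y^z\|_{L^2\to L^2}\lesssim\lambda^{\mu/n-1/2}$, which is \eqref{dampL2-1}; \eqref{dampL2-2} follows verbatim, rescaling instead by the size $2^{-j}$ of the large variable $x$. Since the operators always integrate in $y$, only the $y$-derivative bounds of \eqref{SinCond} enter, so \eqref{AddCondi} plays no role in this theorem. The part requiring the most care is uniformity: one must check that, after rescaling, $|S''_{uv}|\gtrsim 1$ and all the amplitude bounds hold with constants independent of the dyadic indices --- this is where the size of $\mathcal{K}$ relative to $\alpha_1,\dots,\alpha_s$ is used --- and that the powers of $2^{k}$ from the Jacobian, the damping size $2^{-k(n-2)/2}$ and the singular kernel $2^{k\mu}$ combine to leave precisely $\lambda^{-1/2}2^{k\mu}$, after which the constraint $k\lesssim\tfrac1n\log_2\lambda$ forces the sharp exponent $\tfrac{\mu}{n}-\tfrac12$.
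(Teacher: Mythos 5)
Your proof is correct, but it takes a genuinely different route from the paper's, and the difference is worth making explicit. The paper rewrites $T_X^z=\sum_j D_j^X$ and $T_Y^z=\sum_k D_k^Y$ exactly as you do (grouping by the large variable), but then proceeds by almost-orthogonality: for $T_X^z$ it applies Lemma~\ref{Orth_2} to get $\norm{D_j^X(D_{j'}^X)^*}\lesssim \lambda^{2\mu/n-1}2^{-|j-j'|(n-2)/2}$ and pairs this with essential $x$-disjointness of $(D_j^X)^*D_{j'}^X$, while for $T_Y^z$ it needs $\norm{(D_k^Y)^*D_{k'}^Y}$, which it obtains from Remark~\ref{DualLemma}, i.e.\ Lemma~\ref{Orth_2} after swapping $x$ and $y$ --- and this is exactly where the $x$-derivative hypothesis \eqref{AddCondi} enters. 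Your route replaces the dual lemma by the isotropic rescaling $(x,y)=2^{-k}(u,v)$, which puts each block $\mathcal{D}_k^Y$ on a unit-scale box with $|S''_{uv}|\sim 1$, and then applies the operator van der Corput lemma (integrating by parts only in the $y$-variable, so only \eqref{SinCond} is used) to obtain the $k$-dependent bound $\norm{\mathcal{D}_k^Y}\lesssim \lambda^{-1/2}2^{k\mu}$. This bound is strictly sharper than the $k$-uniform $\lambda^{\mu/n-1/2}$ of Remark~\ref{OpVanDerCorput} on the admissible range $2^k\lesssim\lambda^{1/n}$; indeed it is what Lemma~\ref{Orth_2} with $\mathcal{B}_1=\mathcal{B}_2$ gives once the crude bound $\abs{\Lambda_1}\lesssim\lambda^{2\mu/n}$ is replaced by the block-specific $\abs{\Lambda_1}\lesssim 2^{2k\mu}$, using $|x-y|\gtrsim 2^{-k}$ rather than only $|x-y|\gtrsim\lambda^{-1/n}$. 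Because $\sum_{2^k\lesssim\lambda^{1/n}}2^{k\mu}$ is a geometric series dominated by its last term, the triangle inequality alone already gives $\sum_k\norm{\mathcal{D}_k^Y}\lesssim\lambda^{\mu/n-1/2}$; your Cotlar--Stein step with the crude submultiplicative bound on $(\mathcal{D}_k^Y)^*\mathcal{D}_{k'}^Y$ is correct but buys nothing beyond this. The point most worth flagging is that your argument establishes Theorem~\ref{dampL2} without invoking \eqref{AddCondi} at all, which is stronger than what the paper records: compare Theorem~\ref{Log-dampL2}, where dropping \eqref{AddCondi} and using the uniform bound from Remark~\ref{OpVanDerCorput} costs a factor $\log\lambda$; substituting $\lambda^{-1/2}2^{k\mu}$ for $\lambda^{\mu/n-1/2}$ in that proof would remove the logarithm. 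The two places in your write-up needing the most care --- that the cutoff $1-\phi((u-v)\lambda_k^{1/n})$ has uniformly bounded $C^2_v$ norm on the support (since $|u-v|\gtrsim 1$ there, so either $\lambda_k\gg 1$ and the cutoff is identically one, or $\lambda_k\sim 1$ and its $v$-derivatives are $O(1)$), and that the telescoped cutoff $\eta_k(2^{-k}u)=\sum_{m>\mathcal{K}}\Psi(2^m u)$ is a fixed function of $u$ independent of $k$ --- are both handled correctly.
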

To establish these two estimates, we need to give a local oscillatory estimate. Consider the following damped operator
 \begin{equation}\label{DampO}
D(\mathcal{B})f(x)=\int_{\R} e^{i\lambda S(x,y)}\abs{S^{''}_{xy}}^{\frac{1}{2}}K(x,y)\left[1-\phi\left((x-y)\lambda^{\frac{1}{n}}\right)\right]\psi(x,y)f(y)dy,
\end{equation}
 where $K(x,y)$ is defined in \eqref{SinCond}, $\psi\in C_0^{\infty}$ and $\supp\psi\subset \mathcal{B}$. Now we consider two operators $D(\mathcal{B}_1)$ and $D(\mathcal{B}_2)$ with supports $\mathcal{B}_1$ and $\mathcal{B}_2$ respectively. Here both $\mathcal{B}_1$ and $\mathcal{B}_2$ are rectangular boxes with sides parallel to the axes; in addition, the minor box $\mathcal{B}_2$ will be contained in a horizontal translate of the major box $\mathcal{B}_1$. As demonstrated in \cite{phong1998damped}, we give the precise definitions and assumptions.
\begin{align*}
&\mathcal{B}_1=\{(x,y): a_1<x<b_1, c_1<y<d_1\}, \rho_1=d_1-c_1;\\
&\widetilde{\mathcal{B}_1}=\left\{(x,y): a_1-\frac{1}{10}(b_1-a_1)<x<b_1+\frac{1}{10}(b_1-a_1), c_1<y<d_1\right\};\\
&\mathcal{B}_1^*=\{(x,y): a_1-(b_1-a_1)<x<b_1+(b_1-a_1), c_1<y<d_1\}.
\end{align*}
We also have the minor box $\mathcal{B}_2$
\begin{equation*}
\mathcal{B}_2=\{(x,y): a_2<x<b_2, c_2<y<d_2\}, \rho_2=d_2-c_2.
\end{equation*}
For these two boxes, we have the following assumptions:
\begin{enumerate}
\item[(A1)] We define the span \emph{span}($\mathcal{B}_1,\mathcal{B}_2$), as the union of all line segments parallel to the $x$-axis, which joints a point $(x,y)\in\mathcal{B}_1$ with a point $(z,y)\in\mathcal{B}_2$. While we also assume that $S_{xy}^{''}$ does not change sign in the span \emph{span}($\mathcal{B}_1,\mathcal{B}_2$) and satisfies
    \begin{align}
    &\nu\leq \min_{\widetilde{\mathcal{B}_1}}\abs{S_{xy}^{''}}\leq A\nu,\\
    &\max_{\emph{span}(\mathcal{B}_1,\mathcal{B}_2)}\abs{S_{xy}^{''}}\leq A\nu.
    \end{align}
\item[(A2)] $\mathcal{B}_2\subset \mathcal{B}_1^*$, this implies $\rho_2\leq \rho_1$.
\end{enumerate}
 For the cut-off functions $\psi_j(x,y)$, we also assume that
\begin{enumerate}
\item[(A3)] $\sum_k\rho_j^{k}\abs{\partial_y^{k}\psi_j}\leq B$.
\end{enumerate}
 Now we formulate an almost-orthogonality principle which we will rely on to establish Theorem \ref{dampL2}.
\begin{lemma}\label{Orth_2}
Under the assumptions (A1)-(A3), we have
\begin{align}
\norm{D(\mathcal{B}_1)D(\mathcal{B}_2)^{*}}_{L^2}\leq C_{E,A,B,\mu,n} \lambda^{\frac{2\mu}{n}-1}\frac{\sup_{\mathcal{B}_2}\abs{S_{xy}^{''}}^{\frac{1}{2}}}{\sup_{\widetilde{\mathcal{B}_1}}\abs{S_{xy}^{''}}^{\frac{1}{2}}},\label{Orth_2-1}\\
\norm{D(\mathcal{B}_2)D(\mathcal{B}_1)^{*}}_{L^2}\leq C_{E,A,B,\mu,n} \lambda^{\frac{2\mu}{n}-1}\frac{\sup_{\mathcal{B}_2}\abs{S_{xy}^{''}}^{\frac{1}{2}}}{\sup_{\widetilde{\mathcal{B}_1}}\abs{S_{xy}^{''}}^{\frac{1}{2}}}\label{Orth_2-2}
\end{align}
where the constant $C_{A,B,n}$ depends only on $A$, $B$ and $n$.
\end{lemma}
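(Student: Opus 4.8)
The plan is to write out the Schwartz kernel of the composition, separate the $\lambda$-powers coming from the singular factor and from the damping, and recover the missing oscillatory factor by an integration by parts in $y$ in the style of \cite{phong1998damped}. Since $\bigl(D(\mathcal{B}_1)D(\mathcal{B}_2)^{*}\bigr)^{*}=D(\mathcal{B}_2)D(\mathcal{B}_1)^{*}$ and passing to the adjoint preserves the $L^2\to L^2$ norm, the estimates \eqref{Orth_2-1} and \eqref{Orth_2-2} are equivalent and it suffices to prove \eqref{Orth_2-1}. Writing out the kernels, $D(\mathcal{B}_1)D(\mathcal{B}_2)^{*}$ has kernel
\begin{equation*}
L(x,z)=\int_{\R}e^{i\lambda[S(x,y)-S(z,y)]}\,\abs{S''_{xy}(x,y)}^{\frac12}\abs{S''_{xy}(z,y)}^{\frac12}\,K(x,y)\overline{K(z,y)}\,\Theta(x,z,y)\,dy,
\end{equation*}
where $\Theta$ absorbs the two cut-offs $[1-\phi((x-y)\lambda^{1/n})][1-\phi((z-y)\lambda^{1/n})]$ and $\psi_1(x,y)\overline{\psi_2(z,y)}$; on $\supp\Theta$ one has $\abs{x-y}\geq\tfrac12\lambda^{-1/n}$, $\abs{z-y}\geq\tfrac12\lambda^{-1/n}$, $(x,y)\in\widetilde{\mathcal{B}_1}$ and $(z,y)\in\mathcal{B}_2$. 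I would then bound $\norm{D(\mathcal{B}_1)D(\mathcal{B}_2)^{*}}_{L^2}$ by the Schur test of Lemma \ref{Schur-Test}, reducing the problem to estimating $\sup_x\int\abs{L(x,z)}\,dz$ and $\sup_z\int\abs{L(x,z)}\,dx$ by the right-hand side of \eqref{Orth_2-1}.

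The factor $\lambda^{2\mu/n}$ and the Hessian ratio are essentially built in. On $\supp\Theta$, \eqref{SinCond} gives $\abs{K(x,y)}\leq E\abs{x-y}^{-\mu}\lesssim\lambda^{\mu/n}$ and $\abs{\partial_y^{i}K(x,y)}\leq E\abs{x-y}^{-\mu-i}\lesssim\lambda^{(\mu+i)/n}$ for $i=1,2$, and the same bounds with $K(z,y)$; integrating a product of two such singular factors over the $y$-range on which both $\abs{x-y},\abs{z-y}\geq\tfrac12\lambda^{-1/n}$ reproduces the weight $\lambda^{2\mu/n}$, using $0<\mu<1$ exactly as in \cite{liu1999model} (all implicit constants here depending on $E$). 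For the damping, (A1) gives $\abs{S''_{xy}}\sim\nu$ on $\widetilde{\mathcal{B}_1}$, hence $\abs{S''_{xy}(x,y)}^{1/2}\sim\sup_{\widetilde{\mathcal{B}_1}}\abs{S''_{xy}}^{1/2}$, while $\abs{S''_{xy}(z,y)}^{1/2}\leq\sup_{\mathcal{B}_2}\abs{S''_{xy}}^{1/2}$ trivially; the product of the two damping factors is therefore $\sim\nu\cdot\sup_{\mathcal{B}_2}\abs{S''_{xy}}^{1/2}/\sup_{\widetilde{\mathcal{B}_1}}\abs{S''_{xy}}^{1/2}$, which is the ratio in the statement and the source of the orthogonality decay once the lemma is summed in Theorem \ref{dampL2}.

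It remains to produce the factor $(\lambda\nu)^{-1}$ from the oscillation, which is where I follow \cite{phong1998damped}. The phase $\Phi(y)=\lambda[S(x,y)-S(z,y)]$ satisfies $\Phi'(y)=\lambda\int_z^{x}S''_{xy}(t,y)\,dt$; since $S''_{xy}$ keeps a constant sign on $\operatorname{span}(\mathcal{B}_1,\mathcal{B}_2)$ and is $\geq\nu$ on $\widetilde{\mathcal{B}_1}$, the geometry (A1)--(A2) forces $\abs{\Phi'(y)}\gtrsim\lambda\nu\,(b_1-a_1)$ on $\supp\Theta$ as long as the $x$-interval of $\mathcal{B}_2$ is disjoint from that of $\widetilde{\mathcal{B}_1}$ — this is the elementary interval estimate of \cite{phong1998damped}, using that $x$ lies in $\mathcal{B}_1$ so that the segment from $z$ to $x$ crosses the full width of $\widetilde{\mathcal{B}_1}$; in the complementary case $\mathcal{B}_2\cap\widetilde{\mathcal{B}_1}\neq\emptyset$ gives $\sup_{\mathcal{B}_2}\abs{S''_{xy}}\gtrsim\nu$, so the ratio is $\gtrsim1$ and the bound follows from the product of the one-operator van der Corput estimates of \cite{phong1998damped} for $D(\mathcal{B}_1)$ and $D(\mathcal{B}_2)$. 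One integration by parts in $y$ then contributes $(\lambda\nu(b_1-a_1))^{-1}$, and integrating the outcome in $z$ (resp. $x$), whose effective range has length $\lesssim b_1-a_1$, restores a factor $b_1-a_1$, for the net gain $(\lambda\nu)^{-1}$; combined with the previous paragraph this is exactly the right-hand side of \eqref{Orth_2-1}. The one place where the argument departs from the classical damped estimate — and the main technical obstacle — is the bookkeeping of the amplitude derivatives produced by the integration by parts: each $\partial_y$ may fall on $K(x,y)$ or $K(z,y)$ (cost $\lambda^{1/n}$), on a cut-off $1-\phi((x-y)\lambda^{1/n})$ (cost $\lambda^{1/n}$, but supported on a $y$-set of length only $\lambda^{-1/n}$), on $\abs{S''_{xy}}^{1/2}$ or on $\psi_i$ (relative cost $\rho_1^{-1}$, via (A3) and the slow variation $\abs{\partial_y S''_{xy}}\lesssim\nu/\rho_1$ on $\widetilde{\mathcal{B}_1}$), or on $1/\Phi'$ (relative cost $\rho_1^{-1}$). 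A routine but slightly lengthy case check — using $0<\mu<1$, $\rho_2\leq\rho_1$, (A3) and the compactness of $\supp\psi$ — shows that after the $y$-integration each such term is dominated by the main one, so no power of $\lambda$ is lost; if a second integration by parts is needed (when $\rho_2$ is not small compared with $\lambda^{-1/n}$) the same bounds apply with $(\lambda\nu(b_1-a_1))^{-2}$ in place of $(\lambda\nu(b_1-a_1))^{-1}$ and an extra $\rho_2\leq\rho_1$ absorbed. Feeding the resulting pointwise estimate for $L(x,z)$ into Lemma \ref{Schur-Test} yields \eqref{Orth_2-1}, and \eqref{Orth_2-2} follows by duality.
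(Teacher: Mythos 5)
Your high-level strategy is the same as the paper's: write down the kernel of $D(\mathcal{B}_1)D(\mathcal{B}_2)^*$, estimate it by integration by parts in $y$ using the size and derivative bounds on $\Phi=S'_y(x,\cdot)-S'_y(z,\cdot)$ from (A1)--(A2) and on the amplitudes from \eqref{SinCond} and (A3), and then pass to the $L^2$ norm through a Schur-type bound; the $L^2$-boundedness of the maximal operator that the paper actually invokes is just a convenient packaging of the same $\int|Ker(x,z)|\,dz\lesssim\lambda^{2\mu/n-1}\cdot(\text{ratio})$ statement that your use of Lemma \ref{Schur-Test} would need, so that discrepancy is cosmetic.

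The genuine gap is in how you control the kernel for small $|x-z|$. The paper performs \emph{two} integrations by parts to obtain the oscillatory bound \eqref{O-Estimate} proportional to $|x-z|^{-2}$, and then takes the minimum with the trivial size bound \eqref{S-Estimate}; the result is the Poisson-type kernel $\lambda^{2\mu/n-1}\frac{\lambda\nu\rho_2}{1+\lambda^2\nu^2\rho_2^2|x-z|^2}$, whose $z$-integral is $O(1)$ uniformly, with no hypothesis whatsoever on the separation of $\mathcal{B}_1$ and $\mathcal{B}_2$. Your plan produces a $|x-z|^{-1}$ bound from one integration by parts and then integrates it over a $z$-range of length $\sim b_1-a_1$, which only works if you already know $|x-z|\gtrsim b_1-a_1$. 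You cover the complementary regime (where $\mathcal{B}_2$ meets $\widetilde{\mathcal{B}_1}$) by citing the product of one-operator damped van der Corput estimates for $D(\mathcal{B}_1)$ and $D(\mathcal{B}_2)$, but that step is not justified by (A1)--(A3): the single-operator estimate (Remark \ref{OpVanDerCorput}, i.e.\ Lemma 1 of \cite{liu1999model}) requires $|S''_{xy}|$ to be comparable to a constant on a thickening of the box, and the hypotheses here only control $|S''_{xy}|$ on $\widetilde{\mathcal{B}_1}$ and on the span, while $\mathcal{B}_2\subset\mathcal{B}_1^*$ may protrude into regions where $|S''_{xy}|$ is far smaller than $\nu$. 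So as written the overlapping case is not closed. The clean repair is exactly the paper's: do the second integration by parts, keep both the size and the oscillatory bounds, and combine them multiplicatively as $\min$, which renders the Schur integral finite for all $|x-z|$ without any case split. Your bookkeeping of where the $\partial_y$'s land (on $K$, on $1-\phi$, on $|S''_{xy}|^{1/2}$, on $\psi_i$, on $1/\Phi$) is essentially the content of the paper's estimates for $\Lambda_1,\Lambda_2,\Phi$ and is correct in outline, but it has to be carried one derivative further than you do, producing $\int|\partial_y^2\Lambda_1|\,dy\lesssim\lambda^{2\mu/n}\rho_2^{-1}$ and the corresponding $\rho_2^{-2}$ costs on $\Lambda_2/\Phi$.
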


\begin{remark}\label{OpVanDerCorput}
In Lemma \ref{Orth_2}, if we set $\mathcal{B}_1=\mathcal{B}_2$ which we denote by $\mathcal{B}$, we have
\begin{equation*}
\norm{D(\mathcal{B})}_{L^2}\leq C_{E,A,B,\mu,n}\lambda^{\frac{\mu}{n}-\frac{1}{2}}.
\end{equation*}
This corresponds to Lemma 1 in \cite{liu1999model}.
\end{remark}
\begin{remark}\label{DualLemma}
Interchanging the roles of $x$ and $y$ in assumptions (A1)-(A3), \eqref{Orth_2-1} and \eqref{Orth_2-2} also hold for operators $D(\mathcal{B}_1)^{*}D(\mathcal{B}_2)$ and $D(\mathcal{B}_2)^{*}D(\mathcal{B}_1)$ respectively.
\end{remark}
Before the formal proof of Lemma Lemma \ref{Orth_2}, we first show how Lemma \ref{Orth_2} implies Theorem \ref{dampL2}.
\begin{proof}[Proof that Lemma \ref{Orth_2} implies Theorem \ref{dampL2}]
Recall that
\begin{align*}
T_X^zf(x)&=\sum_{j\ll k}D^X_{j,k}f(x),\\
T_Y^zf(x)&=\sum_{j\gg k}D^Y_{j,k}f(x).
\end{align*}
We rewrite them as
\begin{align}
T_X^zf(x)&=\sum_{j\ll k}D^X_{j,k}f(x):=\sum_{j}D^X_{j}f(x),\label{DX}\\
T_Y^zf(x)&=\sum_{j\gg k}D^Y_{j,k}f(x):=\sum_{k}D^Y_{k}f(x).\label{DY}
\end{align}
Evidently, the amplitude of $D^X_{j}$ is supported in a rectangle of the form $\{x\sim 2^{-j}\}\times\{0<y<2^{-j}\}$.
For $j'\neq j$, from Lemma \ref{Orth_2} we know that
\begin{align*}
\norm{D^X_{j}\left(D^X_{j'}\right)^{*}}_{L^2}\leq C_{E,\mathcal{K},\psi,\mu,n} \lambda ^{\frac{2\mu}{n}-1}2^{-\frac{\abs{j-j'}(n-2)}{2}},\\
\norm{D^X_{j'}\left(D^X_{j}\right)^{*}}_{L^2}\leq C_{E,\mathcal{K},\psi,\mu,n} \lambda ^{\frac{2\mu}{n}-1}2^{-\frac{\abs{j-j'}(n-2)}{2}}.
\end{align*}
For $k\neq k'$, from Remark \ref{DualLemma}, we also have
\begin{align*}
\norm{\left(D^Y_{k}\right)^{*}D^Y_{k'}}_{L^2}\leq C_{E,\mathcal{K},\psi,\mu,n} \lambda ^{\frac{2\mu}{n}-1}2^{-\frac{\abs{k-k'}(n-2)}{2}},\\
\norm{\left(D^Y_{k'}\right)^{*}D^Y_{k}}_{L^2}\leq C_{E,\mathcal{K},\psi,\mu,n} \lambda ^{\frac{2\mu}{n}-1}2^{-\frac{\abs{k-k'}(n-2)}{2}}.
\end{align*}
On the other hand, $\{D^X_{j}\}$ and $\{D^Y_{k}\}$ are two sequences of operators that are pairwise essentially disjoint in $x-$variable and $y-$variable respectively. By Cotlar-Stein Lemma, we can conclude Theorem \ref{dampL2} and finish the proof.
\end{proof}
Now we return to the proof of Lemma \ref{Orth_2}.
\begin{proof}
First, we compute the kernel of $D(\mathcal{B}_1)D(\mathcal{B}_2)^{*}$ as follows.
\begin{align*}
Ker(D(\mathcal{B}_1)D(\mathcal{B}_2)^{*})(x,z)=\int& e^{i\lambda\left[S(x,y)-S(z,y)\right]}\left[1-\phi\left((x-y)\lambda^{\frac{1}{n}}\right)\right]\left[1-\phi\left((z-y)\lambda^{\frac{1}{n}}\right)\right]\cdot\\
&K(x,y)\overline{K(z,y)}\abs{S^{''}_{xy}}^{\frac{1}{2}}\abs{S^{''}_{zy}}^{\frac{1}{2}}\psi_1(x,y)\overline{\psi_2(z,y)}dy,
\end{align*}
where $\supp(\psi_1)\subset \mathcal{B}_1, \supp(\psi_2)\subset \mathcal{B}_2$.
Set
\begin{align*}
\Phi(x,y,z)&=S^{'}_{y}(x,y)-S^{'}_{y}(z,y)=\int_z^x S^{''}_{uy}(u,y)du,\\
\Lambda_1(x,y,z)&=\left[1-\phi\left((x-y)\lambda^{\frac{1}{n}}\right)\right]\left[1-\phi\left((z-y)\lambda^{\frac{1}{n}}\right)\right]K(x,y)\overline{K(z,y)},\\
\Lambda_2(x,y,z)&=\abs{S^{''}_{xy}}^{\frac{1}{2}}\abs{S^{''}_{zy}}^{\frac{1}{2}}\psi_1(x,y)\overline{\psi_2(z,y)}.
\end{align*}
For fixed $y$, if both $(x,y)$ and $(z,y)$ are contained in $\widetilde{\mathcal{B}_1}$, from assumption (A1), we know that
\begin{equation}
\abs{x-z}\leq \abs{\Phi(x,y,z)}\leq A\nu\abs{x-z} ,\label{Poly-Property1}\\
\end{equation}
In view of the fact that $S(x,y)$ is a polynomial of degree at most $n$, from Lemma 1.2 in \cite{phong1994models}, we also have
\begin{equation}
\abs{\partial^k_y\Phi(x,y,z)}\leq C_{A,n}\nu\abs{x-z}\rho_1^{-k}.\label{Poly-Property2}
\end{equation}
For fixed $y$, if $(z,y)$ is outside $\widetilde{\mathcal{B}_1}$, we can see that
\begin{equation*}
\abs{\Phi(x,y,z)}\geq \abs{\int_{\tilde{x}}^x S^{''}_{uy}(u,y)du},
\end{equation*}
where $\tilde{x}$ is between $x$ and $z$, in addition, $(\tilde{x}, y)$ is also on the edge of $\widetilde{\mathcal{B}_1}$. Obviously,
\begin{equation*}
\abs{\Phi(x,y,z)}\geq \nu\abs{x-\tilde{x}}.
\end{equation*}
Recall the definition of $\widetilde{\mathcal{B}_1}$, since $\mathcal{B}_2\subset\mathcal{B}_1^{*}$, then
\begin{equation}\label{Poly-Property3}
A\nu\abs{x-z}\geq\abs{\Phi(x,y,z)}\geq \nu\abs{x-\tilde{x}}\geq \nu\cdot\frac{b_1-a_1}{10}\geq \nu\cdot\frac{\abs{x-z}}{30}.
\end{equation}
Similarly, in this case,
\begin{equation}\label{Poly-Property4}
\abs{\partial^k_y\Phi(x,y,z)}\leq C_{A,n}\nu\abs{x-z}\rho_2^{-k}.
\end{equation}

On one hand, we have the trivial size estimate
\begin{align}
\abs{Ker(D(\mathcal{B}_1)D(\mathcal{B}_2)^{*})(x,z)}&\leq C_{E,B,\mu}\lambda^{\frac{2\mu}{n}}\rho_2\sup_{\mathcal{B}_1}\abs{S_{xy}^{''}}^{\frac{1}{2}}\sup_{\mathcal{B}_2}\abs{S_{xy}^{''}}^{\frac{1}{2}}\notag\\
&\leq C_{E,A,B,\mu}\nu\rho_2\frac{\sup_{\mathcal{B}_2}\abs{S_{zy}^{''}}^{\frac{1}{2}}}{\sup_{\widetilde{\mathcal{B}_1}}\abs{S_{xy}^{''}}^{\frac{1}{2}}}.\label{S-Estimate}
\end{align}
On the other hand, we also claim that
\begin{align}\label{O-Estimate}
\abs{Ker(D(\mathcal{B}_1)D(\mathcal{B}_2)^{*})(x,z)}&\leq C_{A,B,n} \frac{\lambda^{\frac{2\mu}{n}}\rho_2^{-1}\sup_{\mathcal{B}_1}\abs{S_{xy}^{''}}^{\frac{1}{2}}\sup_{\mathcal{B}_2}\abs{S_{xy}^{''}}^{\frac{1}{2}}}{\lambda^2\nu^2\abs{x-z}^2}\\
&\leq C_{A,B,n} \frac{\lambda^{\frac{2\mu}{n}}\rho_2^{-1}\nu}{\lambda^2\nu^2\abs{x-z}^2}\cdot\frac{\sup_{\mathcal{B}_2}\abs{S_{zy}^{''}}^{\frac{1}{2}}}{\sup_{\widetilde{\mathcal{B}_1}}\abs{S_{xy}^{''}}^{\frac{1}{2}}}.
\end{align}
On account of the assumptions, the second inequality is easy. For the first inequality, integration by parts yields
\begin{align*}
Ker(D(\mathcal{B}_1)D(\mathcal{B}_2)^{*})(x,z)=\frac{1}{i\lambda}\int& e^{i\lambda\left[S(x,y)-S(z,y)\right]}\frac{d}{dy}\left(\frac{\Lambda_1\Lambda_2}{\Phi}\right)dy\\
=-\frac{1}{\lambda^2}\int& e^{i\lambda\left[S(x,y)-S(z,y)\right]}\frac{d}{dy}\left[\frac{1}{\Phi}\frac{d}{dy}\left(\frac{\Lambda_1\Lambda_2}{\Phi}\right)\right]dy\\
=-\frac{1}{\lambda^2}\int& e^{i\lambda\left[S(x,y)-S(z,y)\right]}\frac{d}{dy}\left[\frac{d\Lambda_1}{dy}\cdot\frac{\Lambda_2}{\Phi^2}+\frac{\Lambda_1}{\Phi}\cdot\frac{d}{dy}
\left(\frac{\Lambda_2}{\Phi}\right)\right]dy\\
=-\frac{1}{\lambda^2}\int& e^{i\lambda\left[S(x,y)-S(z,y)\right]}\frac{d\Lambda_1}{dy}\cdot\left[\frac{1}{\Phi}\cdot\frac{d}{dy}\left(\frac{\Lambda_2}{\Phi}\right)+\frac{d}{dy}
\left(\frac{\Lambda_2}{\Phi^2}\right)\right]+\\
&\frac{d}{dy}\left[\frac{1}{\Phi}\cdot\frac{d}{dy}\left(\frac{\Lambda_2}{\Phi}\right)\right]\cdot\Lambda_1+\frac{\Lambda_2}{\Phi^2}\cdot\frac{d^2\Lambda_1}{dy^2}dy.
\end{align*}
From \cite{liu1999model}, we know that
\begin{equation*}
\abs{\Lambda_1}\leq C_{E,\mu}\lambda^{\frac{2\mu}{n}},\quad\int\abs{\frac{d\Lambda_1}{dy}}dy\leq C_{E,\mu}\lambda^{\frac{2\mu}{n}},\quad \int\abs{\frac{d^2\Lambda_1}{dy^2}}dy\leq C_{E,\mu}\lambda^{\frac{2\mu}{n}}\rho_2^{-1}.
\end{equation*}
The other terms in the integrand can be dealt with using \eqref{Poly-Property1}-\eqref{Poly-Property4}, details can also be found in \cite{phong1994models}, here we only list the facts
\begin{align*}
&\abs{\frac{1}{\Phi}\cdot\frac{d}{dy}\left(\frac{\Lambda_2}{\Phi}\right)+\frac{d}{dy}
\left(\frac{\Lambda_2}{\Phi^2}\right)}\leq C_{A,B,n}\frac{\rho_2^{-1}\sup_{\mathcal{B}_1}\abs{S_{xy}^{''}}^{\frac{1}{2}}\sup_{\mathcal{B}_2}\abs{S_{xy}^{''}}^{\frac{1}{2}}}{\nu^2\abs{x-z}^2},\\
&\abs{\frac{d}{dy}\left[\frac{1}{\Phi}\cdot\frac{d}{dy}\left(\frac{\Lambda_2}{\Phi}\right)\right]}\leq C_{A,B,n}\frac{\rho_2^{-2}\sup_{\mathcal{B}_1}\abs{S_{xy}^{''}}^{\frac{1}{2}}\sup_{\mathcal{B}_2}\abs{S_{xy}^{''}}^{\frac{1}{2}}}{\nu^2\abs{x-z}^2},\\
&\abs{\frac{\Lambda_2}{\Phi^2}}\leq  C_{A,B}\frac{\sup_{\mathcal{B}_1}\abs{S_{xy}^{''}}^{\frac{1}{2}}\sup_{\mathcal{B}_2}\abs{S_{xy}^{''}}^{\frac{1}{2}}}{\nu^2\abs{x-z}^2}.
\end{align*}
Collecting all estimates above, we can conclude \eqref{O-Estimate}. Also, in view of \eqref{S-Estimate}, we obtain
\begin{equation*}
\abs{Ker(D(\mathcal{B}_1)D(\mathcal{B}_2)^{*})(x,z)}\leq C_{E,A,B,\mu,n} \lambda^{\frac{2\mu}{n}-1}\frac{\lambda\nu\rho_2}{1+\lambda^2\nu^2\rho_2^2\abs{x-z}^2}\cdot\frac{\sup_{\mathcal{B}_2}\abs{S_{zy}^{''}}^{\frac{1}{2}}}{\sup_{\widetilde{\mathcal{B}_1}}\abs{S_{xy}^{''}}^{\frac{1}{2}}}.
\end{equation*}
It follows that
\begin{align*}
\abs{D(\mathcal{B}_1)D(\mathcal{B}_2)^{*}f(x)}&\leq\int\abs{Ker(DD^*)(x,z)}\abs{f(z)}dz\\
&\leq  C_{E,A,B,\mu,n}\lambda^{\frac{2\mu}{n}-1}\int \frac{\lambda\nu\rho_2}{1+\lambda^2\nu^2\rho_2^2\abs{x-z}^2}\abs{f(z)}dz\\
&\leq C_{E,A,B,\mu,n}\lambda^{\frac{2\mu}{n}-1}\frac{\sup_{\mathcal{B}_2}\abs{S_{zy}^{''}}^{\frac{1}{2}}}{\sup_{\widetilde{\mathcal{B}_1}}\abs{S_{xy}^{''}}^{\frac{1}{2}}}Mf(x),
\end{align*}
here $M$ is the Hardy-Littlewood maximal operator. Due to the $L^2$ boundedness of the Hardy-Littlewood maximal operator, we can conclude that
\begin{equation*}
\norm{D(\mathcal{B}_1)D(\mathcal{B}_2)^{*}f}_{L^2}\leq C_{E,A,B,\mu,n}\lambda^{\frac{2\mu}{n}-1}\frac{\sup_{\mathcal{B}_2}\abs{S_{zy}^{''}}^{\frac{1}{2}}}{\sup_{\widetilde{\mathcal{B}_1}}\abs{S_{xy}^{''}}^{\frac{1}{2}}}\norm{f}_{L^2}.
\end{equation*}
This gives \eqref{Orth_2-1}. The inequality \eqref{Orth_2-2} follows by taking adjoints. Thus we complete the proof of Theorem \ref{dampL2}.\\
\end{proof}
To get the $L^p$ estimate we also need the following endpoint estimates.
\begin{theorem}\label{EndPo1}
With the same requirements as Theorem \ref{dampL2} for the phase function, then for $\Re(z)=-\frac{1-\mu}{n-2}$ we have
\begin{align}
&\norm{T_X^zf}_{L^{1,\infty}}\leq C_{E,\psi}\norm{f}_{L^1},\label{EndPo-1}\\
&\norm{T_Y^zf}_{L^1}\leq C_{E,\psi}\norm{f}_{L^1}.\label{EndPo-2}
\end{align}
\end{theorem}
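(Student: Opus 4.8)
The plan is to bypass the dyadic decompositions and estimate the full kernels of $T_X^z$ and $T_Y^z$ directly; the point is that the exponent $\Re(z)=-\frac{1-\mu}{n-2}$ is calibrated precisely so that the damping factor $|S''_{xy}|^z$ cancels the blow-up of the singular kernel $K(x,y)$ and leaves behind a scale-invariant kernel of Hardy type. Throughout we assume $n\geq 3$: when $n=2$ the Hessian $S''_{xy}$ is a nonzero constant, no dyadic pieces with $j\ll k$ or $j\gg k$ occur, and the operators vanish; for $n\geq 3$ the exponent satisfies $\Re(z)\in(-1,0)$.

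First I would reduce $T_X^z$ to a pointwise kernel bound. In $T_X^zf=\sum_{j\ll k}D^X_{j,k}f$, for each fixed $(x,y)$ only $O(1)$ values of $j$ have $\phi_j(x)\neq 0$ and only $O(1)$ values of $k$ have $\phi_k(y)\neq 0$; hence the kernel $K^X$ of $T_X^z$ obeys $|K^X(x,y)|\leq C\,|K(x,y)|\,|S''_{xy}|^{\Re(z)}\,\one_{R_X}(x,y)$, where $R_X$ is the region cut out by the constraints $j\ll k$, $|x-y|\gtrsim\lambda^{-1/n}$, and $(x,y)\in\supp\psi$. By the defining property of the separation constant $\mathcal K$ --- that $x$ is the dominant variable in $S''_{xy}$ throughout $\{j\ll k\}$ --- on $R_X$ one has $y\ll x$, so $R_X\subset\{0<y\lesssim x<1/2\}$ and in particular $|x-y|\sim x$, whence $|K(x,y)|\lesssim x^{-\mu}$ by \eqref{SinCond}; moreover $x$ dominates every linear factor $y-\alpha_l x$ and every positive definite form $Q_l(x,y)$ in \eqref{Hessian}, which yields the two-sided comparison $|S''_{xy}|\sim x^{n-2}$ with constants depending only on $S$. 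Since $\Re(z)<0$ and $(n-2)\Re(z)=\mu-1$, this gives $|K^X(x,y)|\lesssim x^{-\mu}\cdot x^{-(1-\mu)}=x^{-1}$ on $R_X$, and $K^X\equiv 0$ off $R_X$. Therefore $|T_X^zf(x)|\lesssim\frac1x\int_0^x|f(y)|\,dy\leq Mf(x)$, where $M$ is the Hardy--Littlewood maximal operator, and the weak-$(1,1)$ bound for $M$ yields \eqref{EndPo-1}; it is exactly because the Hardy averaging operator is not bounded on $L^1$ that only the weak estimate holds for $T_X^z$.

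The estimate \eqref{EndPo-2} for $T_Y^z$ is the mirror image, with the roles of $x$ and $y$ (equivalently of $j$ and $k$) interchanged. Now $j\gg k$ forces $x\ll y$, so on the support of the kernel $K^Y$ of $T_Y^z$ one has $|x-y|\sim y$, $|K(x,y)|\lesssim y^{-\mu}$ by \eqref{SinCond}, and $y$ dominates all the factors of $S''_{xy}$, so $|S''_{xy}|\sim y^{n-2}$; hence $|K^Y(x,y)|\lesssim y^{-1}\,\one_{\{0<x\lesssim y<1/2\}}$. Integrating in $x$ over $\{0<x\lesssim y\}$ gives $\sup_y\int_{\R}|K^Y(x,y)|\,dx\lesssim 1$, so $\|T_Y^zf\|_{L^1}\leq\big(\sup_y\int_{\R}|K^Y(x,y)|\,dx\big)\|f\|_{L^1}\lesssim\|f\|_{L^1}$ by Fubini (the $L^1$ endpoint case of Lemma~\ref{Schur-Test}), which is \eqref{EndPo-2}.

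I expect the one genuinely delicate point to be the two-sided comparison $|S''_{xy}|\sim x^{n-2}$ on $R_X$ (and its analogue for $T_Y^z$): the \emph{lower} bound is the essential one, since $\Re(z)<0$ makes $|S''_{xy}|^{\Re(z)}$ a negative power of $|S''_{xy}|$, and it is here that the choice of $\mathcal K$ enters --- one needs $2^{-\mathcal K}$ small compared with $\min_l\alpha_l$ and $\max_l\alpha_l$ so that $|y-\alpha_l x|\sim x$ uniformly whenever $y\ll x$ (the bound $Q_l(x,y)\sim x^2$ being automatic for a positive definite form once $x$ dominates $y$). Granting that, the remaining content is the elementary fact that the Hardy operator $f\mapsto\frac1x\int_0^x f$ is of weak type $(1,1)$ but not of strong type $(1,1)$ --- which is precisely what distinguishes the weak conclusion \eqref{EndPo-1} from the strong conclusion \eqref{EndPo-2}.
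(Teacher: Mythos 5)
Your proposal is correct and takes essentially the same route as the paper: both drop the oscillatory factor, exploit $|x-y|\sim|x|$ (resp.\ $|y|$) and $|S''_{xy}|\sim|x|^{n-2}$ (resp.\ $|y|^{n-2}$) on the relevant support region, and reduce to the Hardy kernel $|x|^{-1}\one_{\{|y|\leq|x|\}}$ for $T_X^z$ and $|y|^{-1}\one_{\{|x|\leq|y|\}}$ for $T_Y^z$. The only cosmetic difference is in closing \eqref{EndPo-1}: the paper bounds $\frac{1}{|x|}\int_{|y|\leq|x|}|f|\leq\frac{\norm{f}_{L^1}}{|x|}$ and notes that $|x|^{-1}\in L^{1,\infty}$ directly, whereas you route through the weak-$(1,1)$ bound for the Hardy--Littlewood maximal function; both give the same conclusion.
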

\begin{proof}
Observe that
\begin{align*}
\abs{T_X^zf(x)}&\leq C_{\psi}\int_{\abs{y}\leq \abs{x}}\abs{K(x,y)}\abs{S^{''}_{xy}}^{-\frac{1-\mu}{n-2}}\abs{f(y)}dy\\
&\leq C_{E,\psi}\int_{\abs{y}\leq \abs{x}}\abs{x}^{-\mu}\abs{x}^{-1+\mu}\abs{f(y)}dy\\
&=C_{E,\psi}\abs{x}^{-1}\int_{\abs{y}\leq\abs{x}}\abs{f(y)}dy\\
&\leq C_{E,\psi}\abs{x}^{-1}\norm{f}_{L^1}.
\end{align*}
If $f\in L^{1}$, we can easily conclude \eqref{EndPo-1}. Now we turn to prove \eqref{EndPo-2}. Similarly,
\begin{align*}
\abs{T_Y^zf(x)}&\leq C_{\psi}\int_{\abs{y}\geq\abs{x}}\abs{K(x,y)}\abs{S^{''}_{xy}}^{-\frac{1-\mu}{n-2}}\abs{f(y)}dy\\
&\leq C_{E,\psi}\int_{\abs{y}\geq\abs{x}}\abs{y}^{-\mu}\abs{y}^{-1+\mu}\abs{f(y)}dy\\
&=C_{E,\psi}\int_{\abs{y}\geq\abs{x}}\abs{y}^{-1}\abs{f(y)}dy.
\end{align*}
By Fubini's theorem, we have
\begin{align*}
\norm{T_Y^zf}_{L^1}&\leq C_{E,\psi}\iint_{\abs{y}\geq\abs{x}}\abs{y}^{-1}\abs{f(y)}dydx\\
&=C_{E,\psi}\int\abs{y}^{-1}\abs{f(y)}\int_{\abs{y}\geq\abs{x}}dxdy\\
&\leq C_{E,\psi}\int\abs{y}^{-1}\abs{y}\abs{f(y)}dy\\
&=C_{E,\psi}\norm{f}_{L^1}.
\end{align*}
This implies \eqref{EndPo-2}.
\end{proof}
For the sake of interpolation, we also need the following lemma with change of power weights. An earlier version of this lemma appeared in \cite{pansamsze1997}, see also \cite{shi2017sharp},\cite{shi2018uniform} for details of proof.
\begin{lemma}\label{InterpolationLem}
Let $dx$ be the Lebesgue measure on $\R$. Assume $V$ is a linear operator defined on all simple functions with respect to $dx$. If there exist two constant $A_1, A_2>0$ such that
\begin{enumerate}
\item $\norm{Vf}_{L^\infty(dx)}\leq A_1\norm{f}_{L^1(dx)}$ for all simple functions $f$,
\item $\norm{\abs{x}^aVf}_{L^{p_0}(dx)}\leq A_2\norm{f}_{L^{p_0}(dx)}$ for some $1<p_0, a\in \R$ satisfying $ap_0\neq -1$,
\end{enumerate}
then for any $\theta\in (0,1)$, there exists a constant $C=C(a,p_0,\theta)$ such that
\begin{equation}
\norm{\abs{x}^bVf}_{L^{p}(dx)}\leq CA_1^{\theta}A_2^{1-\theta}\norm{f}_{L^{p}(dx)}
\end{equation}
for all simple function $f$, where $b$ and $p$ satisfy $b=-\theta+(1-\theta)a$ and $\frac{1}{p}=\theta+\frac{1-\theta}{p_0}$.
\end{lemma}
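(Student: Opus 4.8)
The plan is to obtain the estimate from the Stein--Weiss interpolation theorem (interpolation of operators with change of measures), applied to hypotheses (1) and (2) after a change of variables that replaces the power weight in (2) by the trivial one. As a preliminary reduction, a power weight $\abs{x}^{c}$ is positive and locally finite on $\R\setminus\{0\}$ and $\{0\}$ is Lebesgue-null, so the two hypotheses and the conclusion all split into the analogous statements on $(0,\infty)$ and on $(-\infty,0)$, the second reducing to the first by $x\mapsto-x$; I therefore work on $(0,\infty)$.

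Next I introduce the change of variables $t=x^{\gamma}$ with $\gamma:=ap_{0}+1$, which is nonzero precisely because $ap_{0}\neq-1$ --- this is the one place where that hypothesis is used. Conjugating $V$ by this substitution together with multiplication by a suitable power of $x$ (the exponent being forced by the requirement that the Jacobian be absorbed on the $L^{p_{0}}$ scale) produces a linear operator $\widetilde V$ for which (2) becomes the \emph{unweighted} bound $\norm{\widetilde V h}_{L^{p_{0}}(dt)}\le A_{2}\norm{h}_{L^{p_{0}}(dt)}$, while (1), after tracking the Jacobian on the $L^{1}$--$L^{\infty}$ scale, becomes $\norm{\widetilde V h}_{L^{\infty}(dt)}\le C_{a,p_{0}}A_{1}\norm{\abs{t}^{\beta}h}_{L^{1}(dt)}$ for an explicit exponent $\beta=\beta(a,p_{0})$ (one computes $\beta=-a(p_{0}-1)/\gamma$).

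Now I interpolate. Applying Stein--Weiss to $\widetilde V$ between the endpoint $L^{1}(\abs{t}^{\beta}\,dt)\to L^{\infty}(dt)$, with interpolation weight $\theta$, and the endpoint $L^{p_{0}}(dt)\to L^{p_{0}}(dt)$, with weight $1-\theta$, produces an estimate whose domain Lebesgue exponent is exactly the $p$ with $1/p=\theta+(1-\theta)/p_{0}$, with domain power weight $\abs{t}^{\beta\theta}$; the relation $p\bigl(1+\theta(p_{0}-1)\bigr)=p_{0}$, which is equivalent to the definition of $p$, is precisely what makes $\beta\theta p$ coincide with the weight exponent obtained by pulling $\norm{f}_{L^{p}(dx)}$ through the substitution. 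Transporting the bound back through $x=t^{1/\gamma}$ then yields the claimed inequality with $b=-\theta+(1-\theta)a$, where the identity $bp+1=\tfrac{p(1-\theta)}{p_{0}}\,\gamma$ --- again a consequence of the defining relations and of $ap_{0}\neq-1$ --- forces the target weight to be exactly $\abs{x}^{b}$; the constant depends only on $a,p_{0},\theta$.

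The main obstacle is the $L^{\infty}$ endpoint. A direct interpolation of (1) and (2), without the change of variables, lands in an $L^{q}$ target with $q>p$, and moreover the weight at an $L^{\infty}$ endpoint is invisible to Stein--Weiss; the change of variables in the second step is what makes the interpolation output convert, upon being transported back, to the correct exponent $p$ and the correct weight $\abs{x}^{b}$. Consequently the only genuinely non-routine part of the proof is the algebraic verification that the chain of exponent identities ($\gamma=ap_{0}+1$, the value of $\beta$, and the two relations defining $b$ and $p$) is mutually consistent, so that every power weight introduced along the way cancels when the substitution is undone.
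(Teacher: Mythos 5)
There is a genuine gap, and it lies precisely at the point your write-up calls the "only genuinely non-routine part." The two hypotheses live at the Lebesgue-exponent endpoints $(p_0,p_0)$ and $(1,\infty)$, and \emph{any} interpolation (Stein--Weiss with change of power weights included) between them produces a bound of the form $L^{p}\to L^{q}$ with $\tfrac1q=\tfrac{1-\theta}{p_0}$ and $\tfrac1p=\theta+\tfrac{1-\theta}{p_0}$, so that $q=p_0/(1-\theta)>p$. Your change of variables $t=x^{\gamma}$ with $\gamma=ap_0+1$ cannot repair this, because a power substitution composed with multiplication by a power of $x$ is an isomorphism of $L^{r}(\text{power weight})$ onto $L^{r}(\text{power weight})$ \emph{for the same exponent} $r$; it can never turn an $L^{q}$-norm into an $L^{p}$-norm when $q\neq p$. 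Concretely, with your normalizations one finds that hypothesis (2) does become $\norm{\widetilde V h}_{L^{p_0}(dt)}\le A_2\norm{h}_{L^{p_0}(dt)}$ and hypothesis (1) becomes $\norm{\widetilde V h}_{L^\infty(dt)}\le \tfrac{A_1}{|\gamma|}\norm{|t|^{\beta}h}_{L^1(dt)}$ with $\beta=-a(p_0-1)/\gamma$, and the source side does transport back to an unweighted $\norm{g}_{L^{p}(dx)}$ exactly when $p\bigl(1+\theta(p_0-1)\bigr)=p_0$, all of which you computed correctly. But the \emph{target} side of the Stein--Weiss output is $\norm{\widetilde Vh}_{L^{q}(dt)}$, and undoing the substitution gives
\begin{equation*}
\norm{\widetilde V h}_{L^{q}(dt)}
= |c_0|\,|\gamma|^{1/q}\,\bigl\Vert |x|^{(\gamma-1)/q}\,Vg\bigr\Vert_{L^{q}(dx)}
= |c_0|\,|\gamma|^{1/q}\,\bigl\Vert |x|^{a(1-\theta)}\,Vg\bigr\Vert_{L^{q}(dx)},
\end{equation*}
which is an $L^{p}\to L^{q}$ bound with weight exponent $a(1-\theta)$, not the claimed $L^{p}\to L^{p}$ bound with weight exponent $b=a(1-\theta)-\theta$. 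The identity $bp+1=\tfrac{p(1-\theta)}{p_0}\gamma$ that you invoke to force the target weight to be $|x|^{b}$ is a true algebraic identity, but it does not make the $L^{q}$-norm on the left into an $L^{p}$-norm; the exponent $q$ is rigid under interpolation and under the substitution, so the conclusion of the lemma is not reached.

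For context, the paper does not prove this lemma; it cites Pan (1997), Shi--Yan (2017), and Shi (2018) for the proof, so there is no in-paper argument to compare against. The point that should be addressed in any proof is exactly the $p$ versus $q$ mismatch you flag at the end, and it is not resolvable by a power change of variables; the cited proofs handle it by a genuinely different mechanism (an analytic-family/weighted duality argument tailored to the pointwise bound $|Vf(x)|\le A_1\norm{f}_{L^1}$, rather than a substitution followed by Stein--Weiss). As written, your argument would prove only the weaker $L^{p}\to L^{q}$ off-diagonal estimate, not the $L^{p}\to L^{p}$ estimate that the lemma asserts and that Section 6 of the paper needs.
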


\section{Local Riesz-Thorin interpolation}
Now, we will establish the sharp $L^p$ decay estimates for $T_X$ and $T_Y$ whenever $\frac{n-2\mu}{n-1-\mu}< p< \frac{n-2\mu}{1-\mu}$. It reads
\begin{align}
\|T_Xf\|_{L^p}&\leq C_{E,\psi}\lambda^{-\frac{1-\mu}{n}}\|f\|_{L^p},\quad \quad \frac{n-2\mu}{n-1-\mu}<p<+\infty\label{T_XImed}\\
\|T_Yf\|_{L^p}&\leq C_{E,\psi}\lambda^{-\frac{1-\mu}{n}}\|f\|_{L^p}, \quad \quad 1<p< \frac{n-2\mu}{1-\mu}.\label{T_YImed}
\end{align}
In fact, the main strategy, the local interpolation with trivial endpoint estimates, was essentially introduced in \cite{xiao2017endpoint}. In this paper, we modify it to adapt to our model.
\begin{lemma}
Consider the singular oscillatory integral operator
\begin{equation*}
\widetilde{D}f(x)=\int_{\R} e^{i\lambda S(x,y)}K(x,y)\left[1-\phi\left((x-y)\lambda^{\frac{1}{n}}\right)\right]\psi(x,y)f(y)dy,
\end{equation*}
with the same assumptions of Lemma 1 in \cite{liu1999model}, furthermore, we also denote the upper bounds of $x-$cross section and $y-$cross section of the support of $\psi(x,y)$ by $\delta_1$ and $\delta_2$ respectively. If we denote the kernel of $\widetilde{D}$ by $K_\lambda(x,y)$ and $\abs{K_\lambda(x,y)}\leq C_K$, then we have
\begin{align}
\norm{\widetilde{D}}_{L^p}&\leq C_{E,\psi,n,p}\min\left\{\lambda^{\left(\frac{2\mu}{n}-1\right)\cdot\frac{1}{p'}}\nu^{-\frac{1}{p'}}C_{K}^{\frac{2}{p}-1}\delta_1^{\frac{2}{p}-1},\quad C_K\delta_1^{\frac{1}{p}}\delta_2^{\frac{1}{p'}}\right\},\quad \quad 1<p<2;\label{DLp<2}\\
\norm{\widetilde{D}}_{L^p}&\leq C_{E,\psi,n,p}\min\left\{\lambda^{\left(\frac{2\mu}{n}-1\right)\cdot\frac{1}{p}}\nu^{-\frac{1}{p}}C_{K}^{1-\frac{2}{p}}\delta_2^{1-\frac{2}{p}}, \quad C_K\delta_1^{\frac{1}{p}}\delta_2^{\frac{1}{p'}}\right\},\ \quad \quad 2<p<+\infty.\label{DLp>2}
\end{align}
\end{lemma}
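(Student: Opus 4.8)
The plan is to regard $\widetilde D$ as a single fixed linear operator and to obtain the two competing bounds in \eqref{DLp<2} and \eqref{DLp>2} by Riesz--Thorin interpolation among three elementary estimates: two ``trivial'' size estimates on $L^1$ and $L^\infty$ that come straight from Fubini's theorem, together with the sharp $L^2$ bound furnished by the operator van der Corput lemma (Remark \ref{OpVanDerCorput}). Concretely, since the kernel $K_\lambda(x,y)$ of $\widetilde D$ satisfies $\abs{K_\lambda}\le C_K$ and is supported in a box whose $x$-section has length at most $\delta_1$ and whose $y$-section has length at most $\delta_2$, Fubini gives immediately
\[
\norm{\widetilde D}_{L^1\to L^1}\le C_K\delta_1,\qquad \norm{\widetilde D}_{L^\infty\to L^\infty}\le C_K\delta_2 .
\]
Interpolating these two produces the Schur-type bound $\norm{\widetilde D}_{L^p\to L^p}\le C_K\delta_1^{1/p}\delta_2^{1/p'}$ for all $1\le p\le\infty$, which is exactly the second entry of the minimum in both \eqref{DLp<2} and \eqref{DLp>2}.

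The substantive input is the $L^2$ estimate. Under the hypotheses of Lemma 1 of \cite{liu1999model}, $\abs{S''_{xy}}$ is comparable to $\nu$ on $\supp\psi$; since $S$ is a polynomial of degree at most $n$, the function $g(x,y):=\nu^{1/2}\abs{S''_{xy}}^{-1/2}$ is smooth on $\supp\psi$ and inherits the amplitude derivative bounds of assumption (A3) (with a constant depending only on the structural constants). Because $\abs{S''_{xy}}^{1/2}g=\nu^{1/2}$ on $\supp\psi$, one has $\widetilde D=\nu^{-1/2}D'$, where $D'$ is the damped operator of Remark \ref{OpVanDerCorput} but with amplitude $g\psi$ in place of $\psi$. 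Applying Remark \ref{OpVanDerCorput} to $D'$ therefore yields
\[
\norm{\widetilde D}_{L^2\to L^2}\le C_{E,\psi,n}\,\lambda^{\frac{\mu}{n}-\frac12}\nu^{-\frac12}=C_{E,\psi,n}\,\lambda^{\frac12\left(\frac{2\mu}{n}-1\right)}\nu^{-\frac12}.
\]

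Finally I would interpolate. For $1<p<2$, Riesz--Thorin between the $L^1$ bound $C_K\delta_1$ (weight $1-\theta=\tfrac2p-1$) and the $L^2$ bound above (weight $\theta=\tfrac2{p'}$) gives precisely $\lambda^{\left(\frac{2\mu}{n}-1\right)/p'}\nu^{-1/p'}C_K^{2/p-1}\delta_1^{2/p-1}$, the first entry of \eqref{DLp<2}; for $2<p<\infty$, Riesz--Thorin between the $L^2$ bound (weight $1-\theta=\tfrac2p$) and the $L^\infty$ bound $C_K\delta_2$ (weight $\theta=1-\tfrac2p$) gives $\lambda^{\left(\frac{2\mu}{n}-1\right)/p}\nu^{-1/p}C_K^{1-2/p}\delta_2^{1-2/p}$, the first entry of \eqref{DLp>2}. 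Taking in each range the minimum of the two available bounds completes the proof. The only step that is not routine bookkeeping is the $L^2$ estimate: it rests on knowing that $\abs{S''_{xy}}\sim\nu$ on the support and that, by the polynomial nature of $S$, its reciprocal square root still satisfies the $y$-derivative decay $\abs{\partial_y^k g}\lesssim\delta_2^{-k}$, so that Remark \ref{OpVanDerCorput} remains applicable after $\abs{S''_{xy}}^{-1/2}$ is absorbed into the amplitude.
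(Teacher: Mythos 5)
Your proposal is correct and follows essentially the same route as the paper: the paper also lists the two trivial endpoint bounds $\norm{\widetilde D}_{L^1}\le C_K\delta_1$, $\norm{\widetilde D}_{L^\infty}\le C_K\delta_2$, the oscillatory $L^2$ bound $\lesssim\lambda^{\mu/n-1/2}\nu^{-1/2}$, and the size $L^2$ bound $C_K(\delta_1\delta_2)^{1/2}$ (which is redundant, being the $p=2$ case of the $L^1$--$L^\infty$ Schur bound), and states that the lemma follows by interpolation. Your exponent arithmetic for the Riesz--Thorin interpolations is right in both ranges. The only cosmetic difference is that the paper simply cites Lemma 1 of \cite{liu1999model} for $\norm{\widetilde D}_{L^2}\lesssim\lambda^{\mu/n-1/2}\nu^{-1/2}$, while you re-derive it from Remark \ref{OpVanDerCorput} by absorbing $\nu^{1/2}\abs{S''_{xy}}^{-1/2}$ into the amplitude; since Remark \ref{OpVanDerCorput} is the damped analogue of that very lemma, the two routes are equivalent, and your remark that the absorbed factor respects (A3) because $\abs{S''_{xy}}\sim\nu$ on the support (and $S$ is a degree-$n$ polynomial) is the key point that makes this re-derivation legitimate.
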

This can be deduced by interpolating the $L^2$ estimates, shown in \cite{liu1999model},
\begin{align*}
\norm{\widetilde{D}}_{L^2}&\leq C_{E,\psi,n}\lambda^{\frac{\mu}{n}-\frac{1}{2}}\nu^{-\frac{1}{2}},\\
\norm{\widetilde{D}}_{L^2}&\leq C_K\left(\delta_1\delta_2\right)^{\frac{1}{2}},
\end{align*}
with the endpoint estimates
\begin{align*}
\norm{\widetilde{D}}_{L^\infty}&\leq C_K\delta_2,\\
\norm{\widetilde{D}}_{L^1}&\leq C_K\delta_1.
\end{align*}
Now we turn to prove \eqref{T_XImed} and \eqref{T_YImed}.
\begin{proof}
For $j\gg k$, we can see
\begin{align}
\notag&\nu=C_{\mathcal{K}}2^{-k(n-2)}, &\delta_1\approx2^{-j};\\
&C_K=C_{E,\mathcal{K},\psi}2^{\mu k},&\delta_2\approx2^{-k}.\label{Fact1}
\end{align}
If $p>2$, by invoking \eqref{DLp>2}, we can deduce that
\begin{align*}
\norm{T_{j,k}}_{L^p}\leq C_{E,\mathcal{K},\psi,n,p}\min\left\{\lambda^{\left(\frac{2\mu}{n}-1\right)\cdot\frac{1}{p}}\left(2^{-k(n-2)}\right)^{-\frac{1}{p}}\left(2^{\mu k}\right)^{1-\frac{2}{p}}\left(2^{-k}\right)^{1-\frac{2}{p}}, \quad 2^{\mu k}2^{-\frac{j}{p}}2^{-\frac{k}{p'}}\right\}.
\end{align*}
Since $j\gg k$, we may set $j=k+M$, then from above we have
\begin{align*}
\norm{T_{j,k}}_{L^p}\leq C_{E,\mathcal{K},\psi,n,p}\min\left\{\lambda^{\left(\frac{2\mu}{n}-1\right)\cdot\frac{1}{p}}\left(2^k\right)^{\frac{n}{p}-1+\mu\left(1-\frac{2}{p}\right)}, \quad 2^{(\mu-1) k}2^{-\frac{M}{p}}\right\}.
\end{align*}
Therefore,
\begin{align*}
\norm{T_Y}_{L^p}&=\norm{\sum_{j\gg k}T_{j,k}}_{L^p}\\
&\leq C_{E,\mathcal{K},\psi,n,p}\sum_{M=0}^{+\infty}\sum_{k=0}^{+\infty}\min\left\{\lambda^{\left(\frac{2\mu}{n}-1\right)\cdot\frac{1}{p}}\left(2^k\right)^{\frac{n}{p}-1+\mu\left(1-\frac{2}{p}\right)}, \quad 2^{(\mu-1) k}2^{-\frac{M}{p}}\right\}.
\end{align*}
We derive from
\begin{equation*}
\lambda^{\left(\frac{2\mu}{n}-1\right)\cdot\frac{1}{p}}\left(2^k\right)^{\frac{n}{p}-1+\mu\left(1-\frac{2}{p}\right)}\approx 2^{(\mu-1) k}2^{-\frac{M}{p}}
\end{equation*}
that
\begin{equation*}
2^{k}\approx \lambda^{\frac{1}{n}}2^{\frac{M}{2\mu-n}}.
\end{equation*}
It implies
\begin{align*}
\sum_{k=0}^{+\infty}\min\left\{\lambda^{\left(\frac{2\mu}{n}-1\right)\cdot\frac{1}{p}}\left(2^k\right)^{\frac{n}{p}-1+\mu\left(1-\frac{2}{p}\right)}, \quad 2^{(\mu-1) k}2^{-\frac{M}{p}}\right\}&\approx 2^{-\frac{M}{p}}\left(\lambda^{\frac{1}{n}}2^{\frac{M}{2\mu-n}}\right)^{\mu-1}\\
&=\lambda^{\frac{\mu-1}{n}}2^{\left(\frac{\mu-1}{2\mu-n}-\frac{1}{p}\right)\cdot M}.
\end{align*}
To make the sum above converge, we require that
\begin{equation*}
\frac{\mu-1}{2\mu-n}-\frac{1}{p}<0,
\end{equation*}
which equals
\begin{equation*}
p<\frac{n-2\mu}{1-\mu}.
\end{equation*}
It should be noted that $p>2$ determines $n>2$. Actually, the case $n=2$ has been discussed in \cite{liu1999model}. At present, we continue to discuss the $L^p$ estimate of $T_Y$ whenever $1<p<2$.  On account of \eqref{DLp<2} and \eqref{Fact1}, we can see
\begin{align*}
\norm{T_{j,k}}_{L^p}&\leq C_{E,\mathcal{K},\psi,n,p}\min\left\{\lambda^{\left(\frac{2\mu}{n}-1\right)\cdot\frac{1}{p'}}\left(2^{-k(n-2)}\right)^{-\frac{1}{p'}}\left(2^{\mu k}\right)^{\frac{2}{p}-1}\left(2^{-j}\right)^{\frac{2}{p}-1}, \quad 2^{\mu k}2^{-\frac{j}{p}}2^{-\frac{k}{p'}}\right\}\\
&=C_{E,\mathcal{K},\psi,n,p}\min\left\{\lambda^{\left(\frac{2\mu}{n}-1\right)\cdot\frac{1}{p'}}\left(2^k\right)^{\frac{n}{p'}-1+\mu\left(\frac{2}{p}-1\right)}2^{-M\left(\frac{2}{p}-1\right)}, \quad 2^{(\mu-1) k}2^{-\frac{M}{p}}\right\}.
\end{align*}
Similarly, if
\begin{equation*}
\lambda^{\left(\frac{2\mu}{n}-1\right)\cdot\frac{1}{p'}}\left(2^k\right)^{\frac{n}{p'}-1+\mu\left(\frac{2}{p}-1\right)}2^{-M\left(\frac{2}{p}-1\right)}\approx 2^{(\mu-1) k}2^{-\frac{M}{p}},
\end{equation*}
then
\begin{equation*}
2^{k}\approx \lambda^{\frac{1}{n}}2^{\frac{M}{2\mu-n}}.
\end{equation*}
Therefore
\begin{align*}
\norm{T_Y}_{L^p}&=\norm{\sum_{j\gg k}T_{j,k}}_{L^p}\\
&\leq C_{E,\mathcal{K},\psi,n,p}\sum_{M=0}^{+\infty}\sum_{k=0}^{+\infty}\min\left\{\lambda^{\left(\frac{2\mu}{n}-1\right)\cdot\frac{1}{p'}}\left(2^k\right)^{\frac{n}{p'}-1+\mu\left(\frac{2}{p}-1\right)}2^{-M\left(\frac{2}{p}-1\right)}, \quad 2^{(\mu-1) k}2^{-\frac{M}{p}}\right\}\\
&\leq C_{E,\mathcal{K},\psi,n,p}\sum_{M=0}^{+\infty}\lambda^{\frac{\mu-1}{n}}2^{\left(\frac{\mu-1}{2\mu-n}-\frac{1}{p}\right)\cdot M}.
\end{align*}
Taking $1<p<2$ into consideration, we know that
\begin{equation*}
\frac{\mu-1}{2\mu-n}-\frac{1}{p}<0.
\end{equation*}
The sum converges, thus we complete the proof of \eqref{T_YImed}. Repeating the argument above for $T_X$, we can get \eqref{T_XImed} similarly. The proof is finished.
\end{proof}
\section{The remaining case: $T_\Delta$}
This section is devoted to establishing the sharp $L^p$ decay estimates for $T_\Delta$. Preceding the formal proof, we state a useful almost-orthogonality principle which was introduced in \cite{phong2001multilinear}.
\begin{lemma}
For the bilinear operator $V(f,g)=\int_{\Omega_1\times\Omega_2}K(x,y)f(x)g(y)dxdy$, assume that
\begin{equation*}
\left\{(x,y): K(x,y)\neq 0\right\}\subset\bigcup_{k=1}^{\infty}\left(I_k\times J_k\right),
\end{equation*}
where $\{I_k\}$ and $\{J_k\}$ are two groups of mutually disjoint measurable subsets of $\Omega_1$ and $\Omega_2$. Let $V_k$ be the bilinear operator with kernel $\chi_{I_k}(x)\chi_{J_k}(y)K(x,y)$, let $\norm{V_k}$ and $\norm{V}$ be the norm of the $T_k$ and $T$ as bilinear operators on $L^p(\Omega_1)\times L^{p'}(\Omega_2)$. Then
\begin{equation}\label{orth}
\norm{V}\leq \sup_k\norm{V_k}.
\end{equation}
\end{lemma}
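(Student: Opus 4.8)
The plan is to use nothing more than the double disjointness of the families $\{I_k\}$ and $\{J_k\}$ together with H\"{o}lder's inequality applied at the level of sequences. First I would reduce to nonnegative simple functions $f$ on $\Omega_1$ and $g$ on $\Omega_2$, and observe that since $\{K\neq 0\}\subset\bigcup_k(I_k\times J_k)$ we may write $K(x,y)=\sum_k\chi_{I_k}(x)\chi_{J_k}(y)K(x,y)$; because the $I_k$ (resp.\ the $J_k$) are pairwise disjoint, this sum has at most one nonzero term at each point $(x,y)$, so $V(f,g)=\sum_k V_k(f,g)$ with $V_k(f,g)=V(f\chi_{I_k},\,g\chi_{J_k})$, the interchange of sum and integral being justified by monotone convergence.

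Next I would estimate termwise, using the definition of $\norm{V_k}$ as a bilinear form on $L^p(\Omega_1)\times L^{p'}(\Omega_2)$:
\[
\abs{V(f,g)}\le\sum_k\abs{V_k(f,g)}\le\sum_k\norm{V_k}\,\norm{f\chi_{I_k}}_{L^p(\Omega_1)}\norm{g\chi_{J_k}}_{L^{p'}(\Omega_2)}\le\Bigl(\sup_k\norm{V_k}\Bigr)\sum_k\norm{f\chi_{I_k}}_{L^p}\norm{g\chi_{J_k}}_{L^{p'}}.
\]
Applying H\"{o}lder's inequality in the index $k$ with the conjugate exponents $p$ and $p'$ gives
\[
\sum_k\norm{f\chi_{I_k}}_{L^p}\norm{g\chi_{J_k}}_{L^{p'}}\le\Bigl(\sum_k\norm{f\chi_{I_k}}_{L^p}^p\Bigr)^{1/p}\Bigl(\sum_k\norm{g\chi_{J_k}}_{L^{p'}}^{p'}\Bigr)^{1/p'},
\]
and the disjointness of the $I_k$ collapses the first factor, $\sum_k\norm{f\chi_{I_k}}_{L^p}^p=\int_{\bigcup_kI_k}\abs{f}^p\le\norm{f}_{L^p(\Omega_1)}^p$, and similarly $\sum_k\norm{g\chi_{J_k}}_{L^{p'}}^{p'}\le\norm{g}_{L^{p'}(\Omega_2)}^{p'}$. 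Combining the three displays yields $\abs{V(f,g)}\le(\sup_k\norm{V_k})\,\norm{f}_{L^p(\Omega_1)}\norm{g}_{L^{p'}(\Omega_2)}$ for all simple $f,g$, which is \eqref{orth} after passing to general $f\in L^p(\Omega_1)$, $g\in L^{p'}(\Omega_2)$ by density.

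There is essentially no obstacle here; the only points deserving a line of care are the justification of interchanging the (countable) sum with the integral, which is immediate once one works with nonnegative functions, and the trivial remark that the asserted bound is vacuous unless $\sup_k\norm{V_k}<\infty$. The mechanism worth isolating is that disjointness in the $x$-variable alone would control only $\sum_k\norm{f\chi_{I_k}}_{L^p}^p$, so one genuinely needs disjointness in \emph{both} variables in order to split the single H\"{o}lder pairing between $f$ and $g$ across the index $k$.
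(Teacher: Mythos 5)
Your proof is correct, and the decomposition via the double disjointness of $\{I_k\}$ and $\{J_k\}$ followed by H\"{o}lder's inequality in the discrete index $k$ with conjugate exponents $p,p'$ is precisely the standard argument; the paper itself does not reprove this lemma but cites it from Phong--Stein--Sturm, where the same reasoning appears. One minor precision: the step $V(f,g)=\sum_k V_k(f,g)$ is really just $\sigma$-additivity of the integral over the pairwise disjoint rectangles $I_k\times J_k$ (legitimate once $K(x,y)f(x)g(y)$ is integrable, or immediately for a nonnegative integrand), so invoking monotone convergence for a kernel $K$ that is signed or complex is a slight misattribution of the theorem used, though it does not affect the validity of the argument.
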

\begin{remark}\label{Orth1}
In fact, if both $\{I_k\}$ and $\{J_k\}$ are groups of sets having finite overlaps, we also have
\begin{equation*}
\norm{V}\leq C\sup_k\norm{V_k},
\end{equation*}
where the implicit constant depends only on the overlapping numbers.
\end{remark}
Now we turn to deal with $T_\Delta$. We claim that
\begin{align}
\|T_\Delta f\|_{L^p}&\leq C_{E,\mathcal{K},\psi,n,p}\lambda^{-\frac{1-\mu}{n}}\|f\|_{L^p},\quad \quad \frac{n-2\mu}{n-1-\mu}\leq p\leq \frac{n-2\mu}{1-\mu}.\label{T_DelEnd}
\end{align}
Note that $T_\Delta f(x)=\sum_{j\sim k}T_{j,k}f(x)$, thus from the remark above, we know that
\begin{equation*}
\norm{T_\Delta}_{L^p}\leq C_{\mathcal{K}}\sup_{j\sim k}\norm{T_{j,k}}_{L^p}.
\end{equation*}
Thus we are reduced to estimating each $T_{j,k}$.
Recalling the Hessian of the phase function, we can list the varieties as follows
\begin{align*}
&y-\alpha_1x=0,&y-\alpha_2x=0, &\quad\quad\quad\quad\cdots, &y-\alpha_sx=0,
\end{align*}
while
\begin{equation*}
0<\alpha_1<\alpha_2<\cdots<\alpha_s.
\end{equation*}
\emph{Case I}: There exists a $l$ such that $\alpha_l=1$.\\
Without loss of generality, we may suppose that $\alpha_1=1$. Along the variety $y-x=0$, we further decompose the support of $T_{j,k}$ dyadically.
\begin{align*}
T_{j,k}f(x)&=\sum_{l_1}\int_{\R} e^{i\lambda S(x,y)}K(x,y)\left[1-\phi\left((x-y)\lambda^{\frac{1}{n}}\right)\right]\phi_{l_1}(y-x)\cdot\\
&\quad\quad\quad\quad\phi_k(x)\phi_k(y)\psi(x,y)f(y)dy\\
&:=\sum_{l_1}T_{j,k}^{l_1}f(x).
\end{align*}
Given that $j\sim k$, in the support of $T_{k,k}^{l_1}$, we know $l_1\geq j\sim k$. So we can divide the sum into two parts as follows
\begin{align}\label{T_Decom}
T_{j,k}f(x)=\sum_{l_1\sim j\sim k}T_{j,k}^{l_1}f(x)+\sum_{l_1\gg j\sim k}T_{j,k}^{l_1}f(x).
\end{align}
To get the final result, it suffices to establish $L^p$ estimates for the above two parts respectively. For the latter one, since $l_1\gg j\sim k$, we may set $l_1=k+M$, then
\begin{align*}
\notag&\nu=C_{\mathcal{K}}2^{-k(n-2)}2^{-M}, &\delta_1\approx2^{-k-M};\\
&C_K=C_{E,\mathcal{K},\psi}2^{\mu(k+M)},&\delta_2\approx2^{-k-M}.
\end{align*}
From \eqref{DLp>2}, we know that for $p>2$ we have
\begin{align*}
\norm{T_{j,k}^{l_1}}_{L^p}&\leq C_{E,\mathcal{K},\psi,n,p}\lambda^{\left(\frac{2\mu}{n}-1\right)\cdot \frac{1}{p}}\left(2^{-k(n-2)-M}\right)^{-\frac{1}{p}}\left(2^{(\mu-1)(k+M)}\right)^{1-\frac{2}{p}},\\
\norm{T_{j,k}^{l_1}}_{L^p}&\leq C_{E,\mathcal{K},\psi,p}2^{(\mu-1)(k+M)}.
\end{align*}
By convex combination, for $0\leq\theta\leq 1$, we have
\begin{align*}
\norm{T_{j,k}^{l_1}}_{L^p}\leq C_{E,\mathcal{K},\psi,n,p}\left[\lambda^{\left(\frac{2\mu}{n}-1\right)\cdot \frac{1}{p}}\left(2^{-k(n-2)-M}\right)^{-\frac{1}{p}}\left(2^{(\mu-1)(k+M)}\right)^{1-\frac{2}{p}}\right]^\theta\cdot\left[2^{(\mu-1)(k+M)}\right]^{1-\theta}.
\end{align*}
We choose suitable $\theta$ to eliminate $k$. This leads to
\begin{align*}
\theta\left[\frac{n-2}{p}+(\mu-1)\left(1-\frac{2}{p}\right)\right]+(\mu-1)(1-\theta)=0.
\end{align*}
Solve this equation about $\theta$ and get the solution
\begin{equation*}
\theta=\frac{p(1-\mu)}{n-2\mu}.
\end{equation*}
The restriction $0\leq\theta\leq 1$ requires $p\leq \frac{n-2\mu}{1-\mu}$. Plugging this into the above convex combination, we obtain
\begin{align*}
 \norm{\sum_{l_1\gg k}T_{j,k}^{l_1}}_{L^p}&\leq C_{E,\mathcal{K},\psi,n,p}\sum_{M=0}^\infty\lambda^{\frac{2\mu-n}{np}\cdot\theta}2^{M\left[\frac{1}{p}+(\mu-1)\left(1-\frac{2}{p}\right)\right]\cdot\theta}2^{(\mu-1)(1-\theta)M}\\
 &=C_{E,\mathcal{K},\psi,n,p}\sum_{M=0}^\infty\lambda^{\frac{\mu-1}{n}}2^{-\frac{M(n-3)(1-\mu)}{n-2\mu}}.
\end{align*}
If $n>3$, the sum above converges. If $n=3$, we shall use \eqref{DLp>2} instead of the convex combination to give the $L^p$ estimate.
\begin{align*}
\norm{T_{j,k}^{l_1}}_{L^p}&\leq C_{E,\mathcal{K},\psi,n,p}\min\left\{\lambda^{\left(\frac{2\mu}{3}-1\right)\cdot \frac{1}{p}}\left(2^{-l_1}\right)^{-\frac{1}{p}}\left(2^{(\mu-1)l_1}\right)^{1-\frac{2}{p}},\quad 2^{(\mu-1)l_1}\right\}\\
&=C_{E,\mathcal{K},\psi,n,p}\min\left\{\lambda^{\left(\frac{2\mu}{3}-1\right)\cdot \frac{1}{p}}\left(2^{l_1}\right)^{\frac{1}{p}+(\mu-1)(1-\frac{2}{p})},\quad 2^{(\mu-1)l_1}\right\}.
\end{align*}
Hence
\begin{align*}
 \norm{\sum_{l_1\gg k}T_{j,k}^{l_1}}_{L^p}&\leq C_{E,\mathcal{K},\psi,n,p}\sum_{l_1=0}^\infty \min\left\{\lambda^{\left(\frac{2\mu}{3}-1\right)\cdot \frac{1}{p}}\left(2^{l_1}\right)^{\frac{1}{p}+(\mu-1)(1-\frac{2}{p})},\quad 2^{(\mu-1)l_1}\right\}\\
 &\leq C_{E,\mathcal{K},\psi,n,p}\lambda^{\frac{\mu-1}{3}}.
\end{align*}
For $p<2$, from \eqref{DLp<2}, we know that
\begin{align*}
\norm{T_{j,k}^{l_1}}_{L^p}&\leq C_{E,\mathcal{K},\psi,n,p}\lambda^{\left(\frac{2\mu}{n}-1\right)\cdot \frac{1}{p'}}\left(2^{-k(n-2)-M}\right)^{-\frac{1}{p'}}\left(2^{(\mu-1)(k+M)}\right)^{\frac{2}{p}-1},\\
\norm{T_{j,k}^{l_1}}_{L^p}&\leq C_{E,\mathcal{K},\psi,n,p}2^{(\mu-1)(k+M)}.
\end{align*}
Similarly, by convex combination, we know that for $0\leq \theta\leq 1$ it follows
\begin{align*}
\norm{T_{j,k}^{l_1}}_{L^p}\leq C_{E,\mathcal{K},\psi,n,p}\left[\lambda^{\left(\frac{2\mu}{n}-1\right)\cdot \frac{1}{p'}}\left(2^{-k(n-2)-M}\right)^{-\frac{1}{p'}}\left(2^{(\mu-1)(k+M)}\right)^{\frac{2}{p}-1}\right]^\theta\cdot\left[2^{(\mu-1)(k+M)}\right]^{1-\theta}.
\end{align*}
Again, choose suitable $\theta$ to eliminate $k$, this requires
\begin{align*}
\theta\left[\frac{n-2}{p'}+(\mu-1)\left(\frac{2}{p}-1\right)\right]+(\mu-1)(1-\theta)=0.
\end{align*}
It equals
\begin{equation*}
\theta=\frac{p'(1-\mu)}{n-2\mu}.
\end{equation*}
Given that $0\leq\theta\leq 1$, we can obtain $p'\leq \frac{n-2\mu}{1-\mu}$, i.e., $p\geq \frac{n-2\mu}{n-1-\mu}$. Therefore
\begin{align*}
 \norm{\sum_{l_1\gg k}T_{j,k}^{l_1}}_{L^p}&\leq C_{E,\mathcal{K},\psi,n,p}\sum_{M=0}^\infty\lambda^{\frac{2\mu-n}{np'}\cdot\theta}2^{M\left[\frac{1}{p'}+(\mu-1)\left(\frac{2}{p}-1\right)\right]\cdot\theta}2^{(\mu-1)(1-\theta)M}\\
 &=C_{E,\mathcal{K},\psi,n,p}\sum_{M=0}^\infty\lambda^{\frac{\mu-1}{n}}2^{-\frac{M(n-3)(1-\mu)}{n-2\mu}}.
\end{align*}
This sum converges if $n>3$, next we will treat the case $n=3$ as we have done previously.
\begin{align*}
\norm{T_{j,k}^{l_1}}_{L^p}&\leq C_{E,\mathcal{K},\psi,n,p}\min\left\{\lambda^{\left(\frac{2\mu}{n}-1\right)\cdot \frac{1}{p'}}\left(2^{-l_1}\right)^{-\frac{1}{p'}}\left(2^{(\mu-1)l_1}\right)^{\frac{2}{p}-1},\quad 2^{(\mu-1)l_1}\right\}\\
&=C_{E,\mathcal{K},\psi,n,p}\min\left\{\lambda^{\left(\frac{2\mu}{3}-1\right)\cdot \frac{1}{p'}}\left(2^{l_1}\right)^{\frac{1}{p'}+(\mu-1)(\frac{2}{p}-1)},\quad 2^{(\mu-1)l_1}\right\}.
\end{align*}
Then we have
\begin{align*}
\norm{\sum_{l_1\gg k}T_{j,k}^{l_1}}_{L^p}&\leq C_{E,\mathcal{K},\psi,n,p}\sum_{l_1=0}^\infty \min\left\{\lambda^{\left(\frac{2\mu}{3}-1\right)\cdot \frac{1}{p'}}\left(2^{l_1}\right)^{\frac{1}{p'}+(\mu-1)(\frac{2}{p}-1)},\quad 2^{(\mu-1)l_1}\right\}\\
&\leq C_{E,\mathcal{K},\psi,n,p}\lambda^{\frac{\mu-1}{3}}.
\end{align*}
Now we turn to deal with $\sum_{l_1\sim j\sim k}T_{j,k}^{l_1}$ in \eqref{T_Decom}. We decompose $T_{j,k}^{l_1}$ dyadically according to the second variety $y-\alpha_2x=0$, specifically we write
\begin{align*}
T_{j,k}^{l_1}f(x)&=\sum_{l_2}\int_{\R} e^{i\lambda S(x,y)}K(x,y)\left[1-\phi\left((x-y)\lambda^{\frac{1}{n}}\right)\right]\phi_{l_1}(y-x)\cdot\\
&\quad\quad\quad\quad\phi_{l_2}(y-\alpha_2x)\phi_k(x)\phi_k(y)\psi(x,y)f(y)dy\\
&:=\sum_{l_2}T_{j,k}^{l_1,l_2}f(x).
\end{align*}
Similarly, we shall also consider the relation between $l_2$ and $k$ and divide this sum into two parts
\begin{equation*}
T_{j,k}^{l_1}f(x)=\sum_{l_2\sim k}T_{j,k}^{l_1,l_2}f(x)+\sum_{l_2\gg k}T_{j,k}^{l_1,l_2}f(x).
\end{equation*}
For each operator in the latter sum, we can see that in the support of $T_{j,k}^{l_1,l_2}$, if we set $l_2=k+M$, we have
\begin{align*}
\notag&\nu=C_{\mathcal{K}}2^{-k(n-2)}2^{-M}, &\delta_1\approx2^{-k-M};\\
&C_K=C_{E,\mathcal{K},\psi}2^{\mu k},&\delta_2\approx2^{-k-M}.
\end{align*}
Repeating the above process, we can also conclude the $L^p$ estimate for $\sum_{l_2\gg k}T_{j,k}^{l_1,l_2}$. We omit the details. Thus we are left with $\sum T_{j,k}^{l_1,l_2}$ where $l_2\sim l_1\sim j \sim k$. Continue to decompose this operator and repeat the above process for the case $l_m\gg k(m\geq 3)$ until we are left with the last sum
\begin{equation*}
\sum_{l_s\sim\cdots\sim l_1\sim j\sim k}T_{j,k}^{l_1,\cdots,l_s}.
\end{equation*}
Notice that there are only finite operators among this sum, so we can consider only one such operator. In fact, in the support of $T_{j,k}^{l_1,\cdots,l_s}$, we have
\begin{align*}
\notag&\nu=C_{\mathcal{K}}2^{-k(n-2)}, &\delta_1\approx2^{-k};\\
&C_K= C_{E,\mathcal{K},\psi}2^{\mu k},&\delta_2\approx2^{-k}.
\end{align*}
By convex combination between the oscillatory estimate and the size estimate, we can get the desired result \eqref{T_DelEnd}.\\
\emph{Case II}: There is no $l$ such that $\alpha_l=1$.\\
Similar to the previous argument, we begin with giving a refined decomposition for the first variety, then keep the orthogonal part
\begin{equation*}
\sum_{l_1\sim j\sim k}T_{j,k}^{l_1}f(x),
\end{equation*}
while the other part
\begin{equation*}
\sum_{l_1\gg j\sim k}T_{j,k}^{l_1}f(x).
\end{equation*}
can be dealt with by summing all local $L^p$ estimates.  Regarding the orthogonal part as the initial operator and dyadically decompose it according to the second variety. Continue the process until the $s$-th step, we now come to the operator
\begin{equation*}
\sum_{l_s\sim\cdots\sim l_1\sim j\sim k}T_{j,k}^{l_1,\cdots,l_s}.
\end{equation*}
In the support of this operator, we can see that
\begin{equation*}
\abs{y-\alpha_1x}\approx C_{\mathcal{K}}2^{-k},\cdots, \abs{y-\alpha_sx}\approx C_{\mathcal{K}}2^{-k},
\end{equation*}
however, the singular kernel $K(x,y)$ is possible to vanish. So we shall provide another dyadic decomposition for this operator as follows
\begin{equation*}
T_{j,k}^{l_1,\cdots,l_s;t}=\sum_{t\sim l_s\sim\cdots\sim l_1\sim j\sim k}T_{j,k}^{l_1,\cdots,l_s;t}+\sum_{t\gg l_s\sim\cdots\sim l_1\sim j\sim k}T_{j,k}^{l_1,\cdots,l_s;t}.
\end{equation*}
In the support of each operator among the latter sum, we have
\begin{align*}
\notag&\nu=C_{\mathcal{K}}2^{-k(n-2)}, &\delta_1\approx 2^{-t};\\
&C_K=C_{E,\mathcal{K},\psi}2^{\mu t},&\delta_2\approx 2^{-t}.
\end{align*}
Repeating the previous process, we obtain
\begin{equation*}
\norm{\sum_{t\gg l_s\sim\cdots\sim l_1\sim j\sim k}T_{j,k}^{l_1,\cdots,l_s;t}}_{L^p}\leq C_{E,\mathcal{K},\psi,n,p}\lambda^{\frac{\mu-1}{n}}.
\end{equation*}
Now, we are left with the finite sum $\sum_{t\sim l_s\sim\cdots\sim l_1\sim j\sim k}T_{j,k}^{l_1,\cdots,l_s;t}$. By almost-orthogonality, we focus only on one such operator. In fact, in the support of one such operator, we have
\begin{align*}
\notag&\nu=C_{\mathcal{K}}2^{-k(n-2)}, &\delta_1\approx 2^{-k};\\
&C_K=C_{E,\mathcal{K},\psi}2^{\mu k},&\delta_2\approx 2^{-k}.
\end{align*}
This is same with the last part of \emph{Case I}, thus we are done with the proof.

\section{Proof of Theorem \ref{Main-Result}}

Now, we are ready to give the proof of Theorem \ref{Main-Result} since all preparation works have been finished. Firstly, we use Stein's interpolation between \eqref{dampL2-2} and \eqref{EndPo-2} and consequently have
\begin{equation}
\|T_Yf\|_{L^{\frac{n-2\mu}{n-1-\mu}}}\leq C_{E,\mathcal{K},\psi,n,p}\lambda^{-\frac{1-\mu}{n}}\|f\|_{L^{\frac{n-2\mu}{n-1-\mu}}}.\label{T_YEnd}
\end{equation}
Secondly, we apply Lemma \ref{InterpolationLem}, in which setting $p_0=2$, $a=n-1-\mu$, to \eqref{EndPo-1} and \eqref{dampL2-2} and consequently have
\begin{equation}
\|T_Xf\|_{L^{\frac{n-2\mu}{n-1-\mu}}}\leq C_{E,\mathcal{K},\psi,n,p}\lambda^{-\frac{1-\mu}{n}}\|f\|_{L^{\frac{n-2\mu}{n-1-\mu}}}.\label{T_XEnd}
\end{equation}
Finally, \eqref{T_YImed}, \eqref{T_XImed}, \eqref{T_DelEnd}, \eqref{T_YEnd}, \eqref{T_XEnd} together imply \eqref{Main-Inq} for $p<2$. The routine adjoint argument shall give the corresponding result for $p>2$.

\section{Further argument}
The previous argument is based on our additional condition \eqref{AddCondi}. However, in \cite{liu1999model} there is no such a condition. With the removal of \eqref{AddCondi}, we establish a result, similar to Theorem \ref{Main-Result}, as follows.
\begin{theorem}\label{Im_T}
We assume the same assumptions with Theorem \ref{Main-Result} except \eqref{AddCondi}, then
\begin{equation}\label{Main-inq-2}
\|Tf\|_{L^p}\leq C_{E,S,\psi,\mu,n,p}\lambda^{-\frac{1-\mu}{n}}\|f\|_{L^p}
\end{equation}
holds for $\frac{n-2\mu}{n-1-\mu}< p< \frac{n-2\mu}{1-\mu}$. Furthermore, for the endpoint $p=\frac{n-2\mu}{n-1-\mu}$, we also have the nearly sharp decay estimate
\begin{equation}\label{EndPo}
\|Tf\|_{L^{\frac{n-2\mu}{n-1-\mu}}}\leq C_{E,S,\psi,\mu,n,p}\lambda^{-\frac{1-\mu}{n}}\abs{\log_2(\lambda)}^{\frac{2-2\mu}{n-2\mu}}\|f\|_{L^{\frac{n-2\mu}{n-1-\mu}}}.
\end{equation}
\end{theorem}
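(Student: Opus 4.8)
The plan is to re-examine the proof of Theorem \ref{Main-Result} and to pin down exactly where the hypothesis \eqref{AddCondi} is actually used. It enters in precisely two places. First, in Remark \ref{DualLemma}: the ``dual'' form of the almost-orthogonality Lemma \ref{Orth_2} is obtained by interchanging the roles of $x$ and $y$, which replaces the integration by parts in $y$ (using $\partial_y K$) by one in $x$ (using $\partial_x K$); this dual orthogonality is precisely what powers the damped $L^2$ estimate \eqref{dampL2-1} for $T_Y^z$, because the pieces $D^Y_k$ are disjoint in the \emph{input} variable but nested in the \emph{output} variable, so the Cotlar--Stein argument must control $(D^Y_k)^\ast D^Y_{k'}$, an $x$-integral. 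Second, \eqref{AddCondi} underlies the adjoint argument for $p>2$ in Theorem \ref{Main-Result}, since $T^\ast$ has kernel $\overline{K(y,x)}$, whose $y$-derivatives are $x$-derivatives of $K$. Everything else used there — the Schur bound (Lemma \ref{Schur-Test}) for $T_1$; the local Riesz--Thorin bounds \eqref{T_XImed}, \eqref{T_YImed} (built only on \eqref{SinCond} and the operator van der Corput lemma of \cite{liu1999model}); the endpoint bound \eqref{T_XEnd} for $T_X$, which rests on \eqref{EndPo-1} and \eqref{dampL2-2} (and \eqref{dampL2-2} uses only the \emph{standard} Lemma \ref{Orth_2}, since for $\{D^X_j\}$ the surviving Cotlar--Stein compositions involve only integration in $y$); and the treatment of $T_\Delta$ in Section~5 — requires only \eqref{SinCond}.

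For the open range $\tfrac{n-2\mu}{n-1-\mu}<p<\tfrac{n-2\mu}{1-\mu}$ I would obtain \eqref{Main-inq-2} directly, with no duality. Writing $T=T_1+T_X+T_\Delta+T_Y$: $T_1$ is handled by Lemma \ref{Schur-Test}; $T_X$ by \eqref{T_XImed}, whose range $\bigl(\tfrac{n-2\mu}{n-1-\mu},+\infty\bigr)$ contains the whole open interval, in particular its part with $p>2$; $T_Y$ by \eqref{T_YImed}, whose range $\bigl(1,\tfrac{n-2\mu}{1-\mu}\bigr)$ likewise contains it (note that \eqref{T_YImed} degenerates precisely at $p=\tfrac{n-2\mu}{n-1-\mu}$, where the exponent $\tfrac{n}{p'}-1+\mu(\tfrac2p-1)$ governing the $k$-sum vanishes, which is why that endpoint is special); and $T_\Delta$ by \eqref{T_DelEnd}. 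None of these invoke \eqref{AddCondi}, so summing yields \eqref{Main-inq-2} on the open interval, the case $p>2$ now being proved from the $T$-side rather than by taking adjoints.

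It remains to treat the endpoint $p_1:=\tfrac{n-2\mu}{n-1-\mu}$. There $T_1$ (Schur), $T_X$ (via \eqref{T_XEnd}) and $T_\Delta$ (via \eqref{T_DelEnd}) still deliver the sharp factor $\lambda^{-\frac{1-\mu}{n}}$; the only piece whose sharp endpoint bound used \eqref{AddCondi} is $T_Y$, via \eqref{T_YEnd} and hence \eqref{dampL2-1}. I would replace \eqref{dampL2-1} by the weaker, nearly-sharp damped estimate
\begin{equation*}
\|T_Y^z f\|_{L^2}\leq C_{E,\mathcal{K},\psi,n,z}\,|\log_2\lambda|\,\lambda^{\frac{\mu}{n}-\frac12}\,\|f\|_{L^2},\qquad \Re(z)=\tfrac12,
\end{equation*}
with the constant of admissible polynomial growth in the imaginary part of $z$, proved without the dual orthogonality: regroup $T_Y^z=\sum_j\widetilde D_j$, $\widetilde D_j=\sum_{k\ll j}D^Y_{j,k}$, so that the $\widetilde D_j$ have mutually disjoint output strips $\{x\sim2^{-j}\}$ and $\|T_Y^z f\|_{L^2}^2=\sum_j\|\widetilde D_j f\|_{L^2}^2$; since the cutoff $1-\phi((x-y)\lambda^{1/n})$ forces $2^{-k}\gtrsim\lambda^{-1/n}$, i.e.\ only $O(\log_2\lambda)$ active scales $k$, a Cauchy--Schwarz in $k$ for each $\widetilde D_j f=\sum_k D^Y_{j,k}(f\mathbf{1}_{\{y\sim2^{-k}\}})$ loses only a logarithm, while the operator van der Corput estimate in the oscillatory range together with Schur/trivial bounds on the deep range $2^{-j}\lesssim\lambda^{-1}2^{k(n-1)}$ gives $\sum_{j\gg k}\|D^Y_{j,k}\|_{L^2}^2\lesssim|\log_2\lambda|\,\lambda^{-1}2^{2k\mu}$; summing the resulting geometric series in $k$, with $2^{2k\mu}\lesssim\lambda^{2\mu/n}$, the two logarithms combine to the single power displayed. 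Feeding this and the $L^1$ endpoint \eqref{EndPo-2} into Stein's complex interpolation at $z=0$, with $\vartheta=\tfrac{2(1-\mu)}{n-2\mu}$ chosen so that $\tfrac1{p_1}=(1-\vartheta)+\tfrac\vartheta2$, produces
\begin{equation*}
\|T_Y f\|_{L^{p_1}}\leq C\,\lambda^{-\frac{1-\mu}{n}}\,|\log_2\lambda|^{\,\vartheta}\,\|f\|_{L^{p_1}}=C\,\lambda^{-\frac{1-\mu}{n}}\,|\log_2\lambda|^{\frac{2-2\mu}{n-2\mu}}\,\|f\|_{L^{p_1}},
\end{equation*}
and combining this with the logarithm-free bounds for $T_1,T_X,T_\Delta$ at $p_1$ gives \eqref{EndPo}.

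The main obstacle is exactly this logarithmic damped $L^2$ bound for $T_Y^z$: without \eqref{AddCondi} there is no integration by parts in the $x$-variable, hence no off-diagonal decay of $(D^Y_k)^\ast D^Y_{k'}$ (or of $(D^Y_{j,k})^\ast D^Y_{j,k'}$) in the scale separation, and one is forced to count the $O(\log_2\lambda)$ active scales by hand — which is intrinsically what produces the surviving logarithm. I expect the delicate part to be the bookkeeping that keeps the $L^2$ loss to a \emph{single} power of $\log_2\lambda$, so that the exponent after interpolation is the smaller $\tfrac{2-2\mu}{n-2\mu}<1$ rather than the $1$ that a naive summation of the per-box $L^{p_1}$ bounds of Section~4 would give; and in checking that no finer orthogonality recovers \eqref{dampL2-1} outright — were that possible, the logarithm at $p_1$ could be removed and the extra hypothesis would be superfluous there.
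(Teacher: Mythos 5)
Your proposal is correct in structure and reproduces the essential content of the paper's argument: the open-interval bound \eqref{Main-inq-2} follows (AC)-free from Lemma \ref{Schur-Test}, \eqref{T_XImed}, \eqref{T_YImed} and \eqref{T_DelEnd}; for the endpoint, one replaces the (AC)-dependent damped $L^2$ estimate \eqref{dampL2-1} by a logarithmically-lossy version and runs Stein interpolation against \eqref{EndPo-2}. Your identification of precisely where (AC) enters (Remark \ref{DualLemma}, hence Cotlar--Stein for $T_Y^z$, and the adjoint argument for $p>2$) is correct, and your interpolation exponent $\vartheta = \frac{2-2\mu}{n-2\mu}$ matches the theorem.

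Where you diverge from the paper is in the proof of the logarithmic $L^2$ estimate. The paper (Theorem \ref{Log-dampL2}) uses the simplest possible route: group by $k$ as in \eqref{DY}, apply the triangle inequality $\norm{T_Y^z}_{L^2}\leq\sum_k\norm{D_k^Y}_{L^2}$, and for each $D_k^Y$ use Remark \ref{OpVanDerCorput} together with the trivial size bound $\norm{D_k^Y}_{L^2}\lesssim 2^{-k(n/2-\mu)}$; the $\log(\lambda)$ comes from counting the $O(\log\lambda)$ scales where the oscillatory bound dominates. You instead group by the output index $j$, exploit disjointness of the $\{x\sim 2^{-j}\}$ strips to write $\norm{T_Y^zf}_{L^2}^2 \approx \sum_j\norm{\widetilde D_jf}_{L^2}^2$, then Cauchy--Schwarz in $k$ and sum the per-block estimates. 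This is a strictly more structured argument: the paper forgoes all orthogonality and pays with the triangle inequality, while you retain orthogonality in the output variable. Both routes give $\norm{T_Y^z}_{L^2}\lesssim\log(\lambda)\,\lambda^{\mu/n-1/2}$; in fact, with the sharper form of Cauchy--Schwarz $\bigl(\sum_k a_kb_k\bigr)^2\leq\bigl(\sum_k a_k^2\bigr)\bigl(\sum_k b_k^2\bigr)$ your approach would actually yield $(\log\lambda)^{1/2}$ rather than $\log\lambda$ at the $L^2$ level, and hence a slightly smaller logarithmic exponent after interpolation, improving on the stated result. You also correctly observe, which the paper does not emphasize, that the logarithmic loss is needed only on the $T_Y$ side: for $\{D_j^X\}$ the surviving Cotlar--Stein compositions are of $D(\mathcal{B}_1)D(\mathcal{B}_2)^{*}$ type, covered by Lemma \ref{Orth_2} itself, so \eqref{dampL2-2} remains valid and \eqref{T_XEnd} is logarithm-free even without (AC). The paper's Theorem \ref{Log-dampL2} proves the logarithmic estimate for both $T_X^z$ and $T_Y^z$, which is harmless but unnecessarily weak for $T_X^z$.

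One small caveat: your intermediate assertion $\sum_{j\gg k}\norm{D^Y_{j,k}}_{L^2}^2\lesssim\abs{\log_2\lambda}\,\lambda^{-1}2^{2k\mu}$ implicitly relies on the improved kernel bound $\abs{K(x,y)}\lesssim 2^{\mu k}$ on the box $\{x\sim 2^{-j}\}\times\{y\sim 2^{-k}\}$ with $j\gg k$ (rather than the cruder $\lambda^{\mu/n}$ from the cutoff $1-\phi((x-y)\lambda^{1/n})$), which is valid since $\abs{x-y}\approx 2^{-k}$ there; this should be made explicit. With that noted, the proposal is sound.
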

Recall the strategy to prove Theorem \ref{Main-Result}: we first split the operator $T$ into three parts $T_X,T_\Delta,T_Y$, the next step is to use damped estimates to deal with $T_X$ and $T_Y$, finally we use local interpolation to treat $T_\Delta$. In the second step, the success of establishing the sharp $L^2$ estimates \eqref{dampL2-1}, \eqref{dampL2-2} relies on Lemma \ref{Orth_2}
and Remark \ref{DualLemma}. Here, the removal of \eqref{AddCondi} results in the failure of Remark \ref{DualLemma}. Instead, we make use of Remark \ref{OpVanDerCorput} to give the nearly sharp $L^2$ estimates as follows.
\begin{theorem}\label{Log-dampL2}
If the Hessian of the phase function is of the form \eqref{Hessian}, we have
\begin{align}
&\norm{T_Y^zf}_{L^2}\leq C_{E,\mathcal{K},\psi,n,z}\lambda^{\frac{\mu}{n}-\frac{1}{2}}\log(\lambda)\norm{f}_{L^2},\label{Log-DampL2-1}\\
&\norm{T_X^zf}_{L^2}\leq C_{E,\mathcal{K},\psi,n,z}\lambda^{\frac{\mu}{n}-\frac{1}{2}}\log(\lambda)\norm{f}_{L^2}.\label{Log-DampL2-2}
\end{align}
\end{theorem}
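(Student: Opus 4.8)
The plan is to follow the proof of Theorem \ref{dampL2} verbatim up to the one step that genuinely uses \eqref{AddCondi} -- namely the appeal to Remark \ref{DualLemma} inside the Cotlar--Stein argument -- and to replace that orthogonality step by a crude summation. Recall from \eqref{DX}--\eqref{DY} the regrouping by the \emph{dominant} scale: $T_Y^zf=\sum_k D^Y_kf$ with $D^Y_k=\sum_{j\gg k}D^Y_{j,k}$, and $T_X^zf=\sum_j D^X_jf$ with $D^X_j=\sum_{k\gg j}D^X_{j,k}$. The point of this grouping is that the amplitude of $D^Y_k$ is supported in a \emph{single} rectangle $\mathcal{B}_k\approx\{x\lesssim 2^{-k}\}\times\{y\sim 2^{-k}\}$, and that of $D^X_j$ in $\mathcal{B}_j\approx\{x\sim 2^{-j}\}\times\{y\lesssim 2^{-j}\}$; moreover, by the quantitative meaning of the constant $\mathcal{K}$ fixed in Section~2 (in the region $j\gg k$ the $y$-variable, and in $j\ll k$ the $x$-variable, dominates every factor $y-\alpha_lx$ and every positive definite $Q_l$ of \eqref{Hessian}), on $\mathcal{B}_k$ one has $|S^{''}_{xy}|\sim 2^{-k(n-2)}$ with no change of sign, and on $\mathcal{B}_j$ one has $|S^{''}_{xy}|\sim 2^{-j(n-2)}$ with no change of sign.

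The key observation is that at most $O(\log\lambda)$ of these pieces are nonzero. On $\supp D^Y_k$ one has $|x-y|\sim 2^{-k}$ (since $x\ll y$ there), while $1-\phi\big((x-y)\lambda^{1/n}\big)$ forces $|x-y|\gtrsim\lambda^{-1/n}$ and $\supp\psi$ forces $|x-y|\lesssim 1$; hence $D^Y_k\equiv 0$ unless $0\lesssim k\lesssim\frac1n\log_2\lambda$, and symmetrically for $D^X_j$. It therefore suffices to bound each single piece by $\lambda^{\mu/n-1/2}$ and sum by the triangle inequality, $\|T_Y^zf\|_{L^2}\leq\sum_k\|D^Y_k\|_{L^2}\|f\|_{L^2}$ and $\|T_X^zf\|_{L^2}\leq\sum_j\|D^X_j\|_{L^2}\|f\|_{L^2}$, the number of terms supplying the factor $\log\lambda$.

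To estimate a single piece I would write its damping factor as $|S^{''}_{xy}|^{1/2}\cdot|S^{''}_{xy}|^{i\Im z}$ and absorb the unimodular factor $|S^{''}_{xy}|^{i\Im z}$, together with all the cut-offs, into the amplitude of the model operator \eqref{DampO}. Since $S^{''}_{xy}$ is non-degenerate on the (slightly enlarged) boxes $\widetilde{\mathcal{B}_k}$, $\widetilde{\mathcal{B}_j}$, Lemma~1.2 of \cite{phong1994models} (the polynomial bound already used for \eqref{Poly-Property2}) gives $|\partial_y^m S^{''}_{xy}|\lesssim_{n,m}|S^{''}_{xy}|\,\rho^{-m}$ at the box scale $\rho$; consequently $|S^{''}_{xy}|^{i\Im z}$ and its $y$-derivatives satisfy assumption (A3) with a constant depending on $\Im z$. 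Hence $D^Y_k$ and $D^X_j$ are operators of the form \eqref{DampO} to which Remark \ref{OpVanDerCorput} applies, yielding $\|D^Y_k\|_{L^2}\leq C_{E,\mathcal{K},\psi,n,z}\lambda^{\mu/n-1/2}$ and the same for $D^X_j$; summing over the $O(\log\lambda)$ admissible indices gives \eqref{Log-DampL2-1} and \eqref{Log-DampL2-2}.

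The proof is soft once Remark \ref{OpVanDerCorput} is available; the only points requiring care are (i) checking that the enlarged boxes $\widetilde{\mathcal{B}_k}$, $\widetilde{\mathcal{B}_j}$ still lie in the region where the dominant variable controls every factor in \eqref{Hessian}, so that (A1) and (A3) hold with constants independent of $k$ and $j$ -- this is precisely what the constant $\mathcal{K}$ of Section~2 was arranged to guarantee -- and (ii) the routine derivative bookkeeping for the unimodular factor $|S^{''}_{xy}|^{i\Im z}$ just described. I expect (i) to be the main (though not deep) obstacle. As an aside, for $T_X^z$ one could in fact avoid the logarithmic loss altogether by keeping the Cotlar--Stein sum over $j$, whose cross-terms $D^X_j(D^X_{j'})^*$ involve only integration in $y$ and hence are controlled by Lemma \ref{Orth_2} without \eqref{AddCondi}; for $T_Y^z$, however, the sum over $k$ admits no such substitute, and we state both uniformly.
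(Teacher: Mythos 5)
Your proof is correct and takes essentially the same route as the paper: both abandon the Cotlar--Stein argument, apply Remark~\ref{OpVanDerCorput} to bound each single piece $D^Y_k$, $D^X_j$ by $\lambda^{\mu/n-1/2}$, and then sum by the triangle inequality, extracting $\log\lambda$ from the number of relevant scales. The only (minor) divergence is in how the tail of the sum is controlled: you observe that the pieces \emph{vanish identically} for $k\gtrsim\tfrac1n\log_2\lambda$ because the truncation $1-\phi((x-y)\lambda^{1/n})$ forces $|x-y|\gtrsim\lambda^{-1/n}$ while $|x-y|\sim 2^{-k}$ on $\supp D^Y_k$; the paper instead pairs the oscillatory bound with the trivial size estimate $\|D^Y_k\|_{L^2}\lesssim 2^{-k(n/2-\mu)}$ and sums $\sum_k\min\{\lambda^{\mu/n-1/2},\,2^{-k(n/2-\mu)}\}$, whose crossover occurs at the same scale $2^k\approx\lambda^{1/n}$. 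These two bookkeeping devices are interchangeable here and give the identical bound. Your closing aside is also correct and worth recording: for $T_X^z$ the Cotlar--Stein argument only invokes Lemma~\ref{Orth_2} (not its dual Remark~\ref{DualLemma}), so the $\log\lambda$ loss could genuinely be avoided for $T_X^z$ even without \eqref{AddCondi}; it is only $T_Y^z$ that forces the loss.
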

\begin{proof}
We still use the notations of \eqref{DX} and \eqref{DY}. If $\Re(z)=\frac{1}{2}$, on one hand, from Remark \ref{OpVanDerCorput} we know that
\begin{align}
\norm{D^X_{j}}_{L^2}&\leq C_{E,\mathcal{K},\psi,n,z}\lambda^{\frac{\mu}{n}-\frac{1}{2}},\label{DX-OscEst}\\
\norm{D^Y_{k}}_{L^2}&\leq C_{E,\mathcal{K},\psi,n,z}\lambda^{\frac{\mu}{n}-\frac{1}{2}},\label{DY-OscEst}
\end{align}
where $C_{E,A,B,\mu,n,z}$ is a constant with at most polynomial growth in $z$. On the other hand, we have the trivial size estimate
\begin{align}
\norm{D^X_{j}}_{L^2}&\leq C_{E,\mathcal{K},\psi}\left[2^{-j(n-2)}\right]^{\frac{1}{2}}2^{j\mu}2^{-j}=2^{-j\left(\frac{n}{2}-\mu\right)},\label{DX-SizEst}\\
\norm{D^Y_{k}}_{L^2}&\leq C_{E,\mathcal{K},\psi}\left[2^{-k(n-2)}\right]^{\frac{1}{2}}2^{k\mu}2^{-k}=2^{-k\left(\frac{n}{2}-\mu\right)}.\label{DY-SizEst}
\end{align}
Hence \eqref{DX}, \eqref{DX-OscEst} as well as \eqref{DX-SizEst} give that
\begin{align*}
\norm{T_X^z}_{L^2}=\norm{\sum_{j}D_j^X}_{L^2}&\leq \sum_{j}\norm{D_j^X}_{L^2}\\
&\leq C_{E,\mathcal{K},\psi,n,z}\sum_{j}\min\left\{\lambda^{\frac{\mu}{n}-\frac{1}{2}}, 2^{-j\left(\frac{n}{2}-\mu\right)}\right\}\\
&=C_{E,\mathcal{K},\psi,n,z}\lambda^{\frac{\mu}{n}-\frac{1}{2}}\log(\lambda).
\end{align*}
Similarly, for $T_Y^z$, there is
\begin{align*}
\norm{T_Y^z}_{L^2}=\norm{\sum_{k}D_k^Y}_{L^2}&\leq \sum_{k}\norm{D_k^Y}_{L^2}\\
&\leq C_{E,\mathcal{K},\psi,n,z}\sum_{k}\min\left\{\lambda^{\frac{\mu}{n}-\frac{1}{2}}, 2^{-k\left(\frac{n}{2}-\mu\right)}\right\}\\
&= C_{E,\mathcal{K},\psi,n,z}\lambda^{\frac{\mu}{n}-\frac{1}{2}}\log(\lambda).
\end{align*}
Therefore, we finish the proof of Theorem \ref{Log-dampL2}.
\end{proof}
By virtue of Theorem \ref{dampL2-1}, the proof of Theorem \ref{Im_T} is same with that of Theorem \ref{Main-Result} which we have shown in Section 7. Here, we omit the details for simplicity.

\section{A necessary condition}
In this part, we will give an example to demonstrate a necessary condition for $p$ to preserve the sharp decay. \\
If we assume that
\begin{align*}
&S(x,y)=x^{n-1}y+xy^{n-1}, & K(x,y)=\abs{x-y}^{-\mu};\\
&\lambda\geq 10, & \psi (x,y)\equiv 1 \text{ on } \abs{(x,y)}\leq \frac{1}{2}.
\end{align*}
Moreover, we assume that $f(y)=\chi_{[\frac{1}{4}, \frac{1}{2}]}(y)$, if $0\leq x\leq\frac{1}{100\lambda}$, we know that
\begin{align*}
\abs{Tf(x)}&\geq \abs{\int_{\frac{1}{4}}^{\frac{1}{2}}\cos \left(\frac{y }{1000^{n-1}}+\frac{y^{n-1}}{1000}\right)\abs{y-\frac{1}{1000}}^{-\mu}dy}\\
&\geq {\int_{\frac{1}{4}}^{\frac{1}{2}}\cos \left(\frac{1}{1000^{n-1}\cdot 2}+\frac{1}{1000\cdot 2^{n-1}}\right)\abs{\frac{1}{2}-\frac{1}{1000}}^{-\mu}dy}\\
&\geq \frac{1}{10}.
\end{align*}
If the apriori estimate $\|Tf\|_{L^p}\lesssim \lambda^{-\frac{1-\mu}{n}}\|f\|_{L^p}$ holds, we can see that
\begin{align*}
\lambda^{-\frac{1-\mu}{n}}\cdot \frac{1}{4^{\frac{1}{p}}}&\gtrsim \left[\int_{\R}\abs{\int_{\R} e^{i\lambda(x^{n-1}y+xy^{n-1})}\abs{x-y}^{-\mu}\psi(x,y)\chi_{[\frac{1}{4}, \frac{1}{2}]}(y)dy}^pdx\right]^{\frac{1}{p}}\\
&\geq \left[\int_{0}^{\frac{1}{100\lambda}}\abs{\int_{\R} e^{i\lambda(x^{n-1}y+xy^{n-1})}\abs{x-y}^{-\mu}\psi(x,y)\chi_{[\frac{1}{4}, \frac{1}{2}]}(y)dy}^pdx\right]^{\frac{1}{p}}\\
&\geq \left(\int_{0}^{\frac{1}{100\lambda}}\frac{1}{10^p}dx\right)^{\frac{1}{p}}\\
&=\lambda^{-\frac{1}{p}}\frac{1}{10^{10p+1}}.
\end{align*}
This implies
\begin{equation*}
\lambda^{-\frac{1-\mu}{n}}\gtrsim \lambda^{-\frac{1}{p}}.
\end{equation*}
Since $\lambda$ can be arbitrarily large, this requires
\begin{equation*}
p\leq \frac{n}{1-\mu}.
\end{equation*}
Interchanging the roles of $x$ and $y$, we can get the other necessary condition
\begin{equation*}
p\geq \frac{n}{n-1+\mu}.
\end{equation*}
\begin{remark}
Observe the necessary condition, there is a gap between it with our result.
\end{remark}

{\bf Acknowledgement:} The author would like to acknowledge financial support from Jiangsu Natural Science Foundation, Grant No. BK20200308. The author thanks Zuoshunhua Shi for many helpful suggestions.


\begin{thebibliography}{10}

\bibitem{greenblatt2005sharp}
M.~Greenblatt.
\newblock Sharp {$L^2$} estimates for one-dimensional oscillatory integral
  operators with {$C^\infty$} phase.
\newblock {\em Amer. J. Math.}, 127(3):659--695, 2005.

\bibitem{hironaka1964resolution}
H.~Hironaka.
\newblock Resolution of singularities of an algebraic variety over a field of
  characteristic zero: {II}.
\newblock {\em Ann. of Math.}, pages 205--326, 1964.

\bibitem{hormander1973oscillatory}
L.~H{\"o}rmander.
\newblock Oscillatory integrals and multipliers on {F$L^p$}.
\newblock {\em Ark. Mat.}, 11(1):1--11, 1973.

\bibitem{liu1999model}
Z.~Liu.
\newblock A model of degenerate and singular oscillatory integral operators.
\newblock {\em Math. Proc. Cambridge Philos. Soc.}, 125(3):545--552, 1999.

\bibitem{pansamsze1997}
Y.~Pan.
\newblock {$L^2$} and {$L^p$} estimates for oscillatory integrals and their
  extended domains.
\newblock {\em Studia Math.}, 122(3):201--224, 1997.

\bibitem{phong1994models}
D.~H. Phong and E.~M. Stein.
\newblock Models of degenerate fourier integral operators and {Radon}
  transforms.
\newblock {\em Ann. of Math.}, 140(3):703--722, 1994.

\bibitem{phong1997newton}
D.~H. Phong and E.~M. Stein.
\newblock The {Newton} polyhedron and oscillatory integral operators.
\newblock {\em Acta Math.}, 179(1):105--152, 1997.

\bibitem{phong1998damped}
D.~H. Phong and E.~M. Stein.
\newblock Damped oscillatory integral operators with analytic phases.
\newblock {\em Adv. Math.}, 134(1):146--177, 1998.

\bibitem{phong2001multilinear}
D.~H. Phong, E.~M. Stein, and J.~Sturm.
\newblock Multilinear level set operators, oscillatory integral operators, and
  {Newton} polyhedra.
\newblock {\em Math. Ann.}, 319(3):573--596, 2001.

\bibitem{ricci1987harmonic}
F.~Ricci and E.~M. Stein.
\newblock Harmonic analysis on nilpotent groups and singular integrals {I}.
  oscillatory integrals.
\newblock {\em J. Funct. Anal.}, 73(1):179--194, 1987.

\bibitem{rychkov2001sharp}
V.~S. Rychkov.
\newblock Sharp {$L^2$} bounds for oscillatory integral operators with
  {$C^\infty$} phases.
\newblock {\em Math. Z.}, 236(3):461--489, 2001.

\bibitem{shi2018uniform}
Z.~Shi.
\newblock Uniform estimates for oscillatory integral operators with polynomial
  phases.
\newblock {\em arXiv preprint arXiv:1809.01300}, 2018.

\bibitem{shi2019damping}
Z.~Shi, S.~Xu, and D.~Yan.
\newblock Damping estimates for oscillatory integral operators with
  real-analytic phases and its applications.
\newblock {\em Forum Math.}, 31(4):843--865, 2019.

\bibitem{shi2017sharp}
Z.~Shi and D.~Yan.
\newblock Sharp {$L^p$}-boundedness of oscillatory integral operators with
  polynomial phases.
\newblock {\em Math. Z.}, 286(3-4):1277--1302, 2017.

\bibitem{varchenko1976newton}
A.~N. Varchenko.
\newblock Newton polyhedra and estimation of oscillating integrals.
\newblock {\em Funct. Anal. Appl.}, 10(3):175--196, 1976.

\bibitem{xiao2017endpoint}
L.~Xiao.
\newblock Endpoint estimates for one-dimensional oscillatory integral
  operators.
\newblock {\em Adv. Math.}, 316:255--291, 2017.

\bibitem{yang2004sharp}
C.~W. Yang.
\newblock Sharp {$L^p$} estimates for some oscillatory integral operators in
  $\mathbb{R}^1$.
\newblock {\em Illinois J. Math.}, 48(4):1093--1103, 2004.

\bibitem{zhang20032}
Z.~Zhang and F.~Liu.
\newblock {$L^2$} estimates for oscillatory singular integral operators with
  analytic phases.
\newblock {\em Sci. China Math.}, 46(6):815--823, 2003.

\end{thebibliography}

\end{document}